\newcommand{\R}{\mathbb R}
\newcommand{\p}{\partial}
\newcommand{\ve}{\varepsilon}
\newcommand{\f}{\frac}
\newcommand{\na}{\nabla}
\newcommand{\la}{\lambda}
\renewcommand{\b}{\beta}
\newcommand{\al}{\alpha}
\renewcommand{\t}{\tilde}
\newcommand{\vp}{\varphi}
\renewcommand{\O}{\Omega}
\renewcommand{\th}{\theta}
\newcommand{\g}{\gamma}
\newcommand{\G}{\Gamma}
\newcommand{\si}{\sigma}
\newcommand{\Dl}{\Delta}
\newcommand{\ds}{\displaystyle}
\newcommand{\RN}[1]{\textup{\uppercase\expandafter{\romannumeral#1}}}
\theoremstyle{plain}
\newtheorem{theorem}{Theorem}[section]
\newtheorem{lemma}[theorem]{Lemma}
\theoremstyle{definition}
\theoremstyle{remark}
\newtheorem{remark}{Remark}[section]
\newtheorem*{acknowledgement}{Acknowledgement}
\numberwithin{equation}{section}
\title{On the global existence and blowup of smooth solutions of
  3-D compressible Euler equations with time-depending damping}
\author{Fei Hou$^{1, *}$, \qquad Ingo Witt$^{2, *}$, \qquad Huicheng
  Yin$^{3, }$\footnote{Fei Hou (\texttt{houfeimath@gmail.com}) and
    Huicheng Yin (\texttt{huicheng$@$nju.edu.cn}) were supported by
    the NSFC (No.~11571177) and the Priority Academic Program
    Development of Jiangsu Higher Education Institutions. Ingo Witt
    (\texttt{iwitt@uni-math.gwdg.de}) was partly supported by the DFG
    via the Sino-German project ``Analysis of Partial Differential
    Equations and Applications."  This research was carried out when
    Huicheng Yin and Fei Hou were visiting the Mathematical Institute
    of the University of G\"ottingen in February 2013 and November
    2014, respectively.}\\[12pt] {\small 1. Department of Mathematics and IMS,
  Nanjing University, Nanjing 210093, China}\\ {\small 2. Mathematical
  Institute, University of G\"ottingen, Bunsenstr. 3-5, 37073
  G\"ottingen, Germany}\\ {\small 3. School of Mathematical Sciences, Nanjing
  Normal University, Nanjing 210023, China}}
\begin{document}

\maketitle
\thispagestyle{empty}

\begin{abstract}
In this paper, we are concerned with the global existence and blowup
of smooth solutions of the 3-D compressible Euler equation with
time-depending damping
\begin{equation*}
\left\{ \enspace
\begin{aligned}
  &\p_t\rho+\operatorname{div}(\rho u)=0,\\
  &\p_t(\rho u)+\operatorname{div}\left(\rho u\otimes
  u+p\,\RN{1}_{3}\right)=-\,\ds\f{\mu}{(1+t)^{\la}}\,\rho u,\\
  &\rho(0,x)=\bar \rho+\ve\rho_0(x),\quad u(0,x)=\ve u_0(x),
\end{aligned}
\right.
\end{equation*}
where $x\in\R^3$, $\mu>0$, $\la\ge 0$, and $\bar\rho>0$ are constants,
$\rho_0,\, u_0\in C_0^{\infty}(\R^3)$, $(\rho_0, u_0)\not\equiv 0$,
$\rho(0,\cdot)>0$, and $\ve>0$ is sufficiently small. For
$0\le\la\le1$, we show that there exists a global smooth solution
$(\rho, u)$ when $\operatorname{curl} u_0\equiv 0$, while for $\la>1$,
in general, the solution $(\rho, u)$ will blow up in finite
time. Therefore, $\la=1$ appears to be the critical value for the
global existence of small amplitude smooth solutions.

\noindent
\textbf{Keywords.} Compressible Euler equations, damping,
time-weighted energy inequality, Klainerman-Sobolev inequality,
%%lifespan,
blowup.

\noindent
\textbf{2010 Mathematical Subject Classification.} 35L70, 35L65, 35L67, 76N15.
\end{abstract}

%% -------------------------------------------------------------------

\section{Introduction}
In this paper, we are concerned with the global existence and blowup
of smooth solutions of the three-dimensional compressible Euler
equations with time-depending damping
\begin{equation}\label{1.1}
\left\{ \enspace
\begin{aligned}
&\p_t\rho+\operatorname{div}(\rho u)=0,\\
&\p_t(\rho u)+\operatorname{div}(\rho u\otimes
  u+p\,\RN{1}_{3})=-\,\f{\mu}{(1+t)^{\la}}\,\rho u,\\
&\rho(0,x)=\bar \rho+\ve\rho_0(x),\quad u(0,x)=\ve u_0(x),
\end{aligned}
\right.
\end{equation}
where $x=(x_1, x_2, x_3)\in\R^3$, $\rho$, $u = (u_1, u_2, u_3)$, and
$p$ stand for the density, velocity, and pressure, respectively,
$\RN{1}_{3}$ is the $3\times 3$ identity matrix, and $u_0=(u_{1,0},
u_{2,0}, u_{3,0})$. The equation of state of the gas is assumed to be
$p(\rho)=A\rho^{\g}$, where $A>0$ and $\g>1$ are
constants. Furthermore, $\mu>0$, $\la\ge 0$, and $\bar\rho>0$ are
constants, $\rho_0,\, u_0\in C_0^{\infty}(\R^3)$, $(\rho_0,
u_0)\not\equiv 0$, $\rho(0, \cdot)>0$, and $\ve>0$ is sufficiently
small.

For $\mu=0$, \eqref{1.1} is the standard compressible Euler
equation. It is well known that smooth solutions $(\rho, u)$ of
\eqref{1.1} will in general blow up in finite time. For the extensive
literature on blowup results and the blowup mechanism for $(\rho, u)$,
see \cite{1,2,3,4,5,6,17,18,20,25,26} and the references therein.

For $\la=0$, it has been shown that \eqref{1.1} admits a global smooth
solution and the long-term behavior of the solution $(\rho,u)$
has been established, see \cite{11,16,19,21,22}.

For $\mu>0$ and $\la>0$, an interesting problem arises: does the
smooth solution of \eqref{1.1} blow up in finite time or does it exist
globally? In this paper, we will systematically study this problem
under the assumption that $\operatorname{curl}u_0=
\left(\p_2u_{3,0}-\p_3u_{2,0}, \p_3u_{1,0}-\p_1u_{3,0},
\p_1u_{2,0}-\p_2u_{1,0}\right)\equiv0$. In this case it is not hard to
see that $\operatorname{curl} u(t,\cdot)\equiv 0$ for all $t\ge 0$ as
long as the smooth solution $(\rho, u)$ of \eqref{1.1} exists. Then
one can introduce a potential function $\vp=\vp(t,x)$ such that
$u=\na\vp$ (here and below, $\na=\na_x$), where the $C^\infty$ scalar
function $\vp$ has compact support in $x$ (as $u(t,\cdot)$ has compact
support in view of $u_0\in C_0^{\infty}(\R^3)$ and finite propagation
speed which holds for hyperbolic systems). Substituting $u=\na\vp$
into the second equation of \eqref{1.1}, we obtain
\begin{equation}\label{1.2}
  \p_t\vp+\f12|\na\vp|^2+h(\rho)+\f{\mu}{(1+t)^{\la}}\,\vp=0,
\end{equation}
where $\ds h'(\rho)=c^2(\rho)/\rho$ with $c(\rho)=\sqrt{p'(\rho)}$ and
$h(\bar\rho)=0$. From $h'(\rho)>0$ for $\rho>0$
%%and the implicit function theorem
we have that
\begin{equation}\label{1.3}
  \rho=h^{-1}\left(-\left(\p_t\vp+\f12\,|\na\vp|^2
  +\f{\mu}{(1+t)^{\la}}\,\vp\right)\right),
\end{equation}
where $\bar\rho=h^{-1}(0)$ and $h^{-1}$ is the inverse function of
$h=h(\rho)$. Substituting \eqref{1.3} into the first equation of
\eqref{1.1} yields
\begin{multline}\label{1.4}
  \p_t^2\vp-c^2(\rho)\Delta\vp+2\sum_{k=1}^3\left(\p_k\vp\right)
  \p_{tk}\vp
  +\sum_{i,k=1}^3\left(\p_i\vp\right)\left(\p_k\vp\right)\p_{ik}\vp
  \\ +\f{\mu}{(1+t)^{\la}}\,|\na\vp|^2
  +\p_t\left(\f{\mu}{(1+t)^{\la}}\,\vp\right)=0.
\end{multline}

As for the initial data $\vp(0,\cdot)$ and $\p_t\vp(0,\cdot)$ for
Eq.~\eqref{1.4}: Obviously, $\vp(0,\cdot)=\ve\vp_0$, where $\ds
\vp_0(x)=\int_{-\infty}^{x_1}u_{1,0}(s,x_2,x_3)\,ds$. Note that
$\vp_0\in C_0^{\infty}(\R^3)$ in view of $\operatorname{curl}u_0\equiv
0$ and $u_0\in C_0^{\infty}(\R^3)$. Furthermore, from Eq.~\eqref{1.2} we
infer that $\p_t\vp(0,\cdot)=\ve\vp_1+\ve^2g(\cdot,\ve)$, where $\ds
\vp_1=-\left(\mu \vp_0 + \f{c^2(\bar\rho)}{\bar\rho}\,\rho_0\right)$
and
\[
  g(x,\ve)=-\rho^2_0(x)\int_0^1\left(\f{c^2(\rho)}{\rho}\right)'\biggr|_{\rho=
    \bar\rho+\th\ve\rho_0(x)}\,d\th - \f12\,\sum_{i=1}^3u^2_{i,0}(x).
\]
Notice that $g(x,\ve)$ is smooth in $(x, \ve)$ and has compact support
in $x$. Consequently, studying problem~\eqref{1.1} under the
assumption $\operatorname{curl}u_0\equiv 0$ is equivalent to
investigating the problem
\begin{equation}\label{1.5}
\left\{ \enspace
\begin{aligned}
  &\p_t^2\vp-c^2(\rho)\,\Delta\vp+2\sum_{k=1}^3(\p_k\vp)\p_{tk}\vp
  +\sum_{i,k=1}^3(\p_i\vp)(\p_k\vp)\p_{ik}\vp \\ & \hspace*{70mm}
  +\f{\mu}{(1+t)^{\la}}\,|\na\vp|^2+\p_t\left(\f{\mu}{(1+t)^{\la}}\vp\right)=0,\\
  &\vp(0,x)=\ve\vp_0(x),\quad \p_t\vp(0,x)=\ve\vp_1(x)+\ve^2g(x,\ve).
\end{aligned}
\right.
\end{equation}
Here we mention that
\[
  c^2(\rho) = c^2(\bar \rho) -\left(\gamma-1\right) \left(\p_t\vp
  +\frac12\,|\nabla \vp|^2 + \frac\mu{(1+t)^\lambda}\,\vp\right)
\]
%\begin{multline*}
%c^2(\rho)=c^2(\bar\rho)-\f{2\bar\rho c'(\bar\rho)}{c(\bar\rho)}\,\p_t\vp
%-\f{2\mu\bar\rho c'(\bar\rho)}{c(\bar\rho)}\,\f{\vp}{(1+t)^{\la}}
%\\ +O\left(|\na_{t,x}\vp|^2\right)
%+O\left(\f{\vp\,\p_t\vp}{(1+t)^{\la}}\right)
%+O\left(\f{\vp^2}{(1+t)^{2\la}}\right)
%\end{multline*}
which follows by direct computation.

We now state the first main result of this paper.

\begin{theorem}[Global existence for $0\le\la\le 1$]\label{thm1.1}
Suppose that $\operatorname{curl}u_0\equiv0$. If $\mu>0$ and $0\le
\la\le 1$, then, for $\ve>0$ small enough, \eqref{1.5} admits a global
smooth solution $\vp$. As a consequence, \eqref{1.1} has a global
smooth solution $(\rho, u)$ which fulfills $\rho>0$ and which is
uniformly bounded for $t\ge 0$ together with all its derivatives.
\end{theorem}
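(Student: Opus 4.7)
My plan is to deduce Theorem~\ref{thm1.1} from uniform a priori estimates on the quasilinear wave equation~\eqref{1.5} via the standard continuation argument. Local smooth existence being classical for symmetric hyperbolic systems, it suffices to show that on any maximal interval $[0,T_*)$ the high-order energy of $\vp$ together with the $C^1$-norm of $\vp$ remain $O(\ve)$ uniformly; this forces $T_* = +\infty$ and keeps $\rho$ close to $\bar\rho > 0$, whence $(\rho,u)$ is recovered via $u = \na\vp$ and~\eqref{1.3}.

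For the a priori estimate I would implement the vector field method tailored to the time-damped linearization $L\vp \equiv \p_t^2\vp - c^2(\bar\rho)\Delta\vp + \f{\mu}{(1+t)^\la}\p_t\vp$. The admissible fields are the translations $\p_t,\p_1,\p_2,\p_3$, the rotations $\O_{ij} = x_i\p_j - x_j\p_i$, and, if needed, the scaling $S = t\p_t + x\cdot\na$: spatial translations and rotations commute exactly with $L$, while $\p_t$ and $S$ produce controllable commutators from the $t$-dependent coefficient $(1+t)^{-\la}$, whereas Lorentz boosts are excluded as incompatible with the damping. Commuting~\eqref{1.5} with $\G^\alpha \in \{\p,\O,S\}^{|\alpha|}$ for $|\alpha| \le N$, $N$ large, yields $L(\G^\alpha\vp) = F_\alpha$, where $F_\alpha$ collects the quadratic and cubic nonlinearities in $\G^{\le|\alpha|}\vp$, plus commutator and zero-order contributions.

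I would then work with a time-weighted energy
\[
E_N(t) = (1+t)^{2\si}\sum_{|\alpha|\le N}\int_{\R^3}\bigl(|\p_t\G^\alpha\vp|^2 + c^2(\bar\rho)|\na\G^\alpha\vp|^2\bigr)\,dx,
\]
with $\si = \si(\la,\mu) \ge 0$ to be tuned. Multiplying $L(\G^\alpha\vp) = F_\alpha$ by $(1+t)^{2\si}\p_t\G^\alpha\vp$ and integrating produces
\[
\f{d}{dt}E_N + \f{2\mu(1+t)^{2\si}}{(1+t)^\la}\sum_{|\alpha|\le N}\|\p_t\G^\alpha\vp\|_{L^2}^2 \le \f{2\si}{1+t}\,E_N + \text{(nonlinear remainder)},
\]
so the damping dissipation absorbs the weight-derivative term precisely when $\la \le 1$ (at $\la = 1$ provided $\si < \mu$); this dichotomy is exactly what makes $\la = 1$ critical. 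The nonlinear remainder is controlled via the Klainerman--Sobolev pointwise bound $|\p\G^{\le N/2}\vp(t,x)| \lesssim (1+t+r)^{-1}(1+|t-r|)^{-1/2}\sqrt{E_N(t)}$ with $r = |x|$, which combined with the damping dissipation renders the nonlinear contributions Gronwall-integrable and gives $E_N(t) \lesssim \ve^2$ on $[0,T_*)$ by continuous induction.

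The main obstacle is the critical case $\la = 1$, where $1/(1+t)$ is borderline non-integrable and the bootstrap margin is tight: the weight $\si$ must lie in a small interval depending on $\mu$, and the Klainerman--Sobolev decay has to offset every loss in $F_\alpha$ exactly. A secondary difficulty is the zero-order term $-\f{\mu\la}{(1+t)^{\la+1}}\vp$ coming from $\p_t(\mu(1+t)^{-\la}\vp)$, since $\|\vp\|_{L^2}$ does not appear in the natural energy; here one exploits the finite propagation speed of~\eqref{1.5} to keep $\vp(t,\cdot)$ supported in a ball of radius $R_0 + c(\bar\rho)(1+t)$ and invokes the Poincar\'e inequality on that ball to obtain $\|\vp(t)\|_{L^2} \lesssim (1+t)\|\na\vp(t)\|_{L^2}$, the extra $(1+t)$ factor being offset by the prefactor $(1+t)^{-\la-1}$ for every $\la \ge 0$. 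Once $E_N$ is uniformly bounded, $|\p\vp|$ and $|\vp|$ decay, $\rho$ stays strictly positive, the continuation criterion is satisfied, and Theorem~\ref{thm1.1} follows.
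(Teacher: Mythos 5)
Your overall architecture (local existence plus continuation, restricted vector fields $\{\p,\O,S\}$ excluding the Lorentz boosts, a time-weighted energy closed by bootstrap) matches the paper's strategy, but two of your key technical steps fail as stated. First, the pointwise decay you invoke, $|\p\G^{a}\vp(t,x)|\lesssim (1+t+r)^{-1}(1+|t-r|)^{-1/2}\sqrt{E_N(t)}$, is the full Klainerman--Sobolev inequality, which requires the Lorentz boosts $H_i$ that you (correctly) exclude. With only $\{\p,\O,S\}$ the available global Sobolev inequality gives merely $|u|\lesssim(1+r)^{-1}\sum_{|a|\le2}\|\G^a u\|$, i.e.\ \emph{no} time decay in the interior $r\lesssim 1$. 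Since your entire nonlinear closure rests on that decay, the argument breaks here. The paper compensates in two ways: it rescales to $\psi=\vp/(1+t)^{\la}$ so that the weight $(1+t)^{2\la}$ in the energy converts into a genuine factor $(1+t)^{-\la}$ in the pointwise bound for $\p\G^a\psi$, and it supplements this with the Klainerman--Sideris weighted estimates (bounds on $\langle r-t\rangle\na\p\G^a\psi$ in $L^2$ and $L^\infty$ via $t\|\square\cdot\|$), which you would need some substitute for when estimating the quasilinear terms $\p\vp\,\p^2\G^a\vp$ at top order.

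Second, your energy identity uses only the multiplier $(1+t)^{2\si}\p_t\G^\al\vp$. The resulting weight-derivative term $\f{2\si}{1+t}E_N$ contains $\f{2\si}{1+t}(1+t)^{2\si}\|\na\G^\al\vp\|^2$, whereas the damping produces dissipation only in $\|\p_t\G^\al\vp\|^2$; the gradient part therefore cannot be absorbed, and Gronwall then yields at best $E_N(t)\lesssim\ve^2(1+t)^{C\si}$, which does not close the bootstrap (while $\si=0$ removes the problem but destroys the decay you need for the nonlinearity). This is precisely why the paper uses the composite multiplier $m(1+t)^{2\la}\p_t\psi+(1+t)^{2\la-1}\psi+\kappa(1+t)^{\la}\psi$, with $m$ chosen so that $\la<1/m<\min\{\mu+\la,2\la\}$: the $\psi$-proportional pieces generate a positive space-time term $(1-\la m)(1+t)^{2\la-1}|\na\psi|^2$ (and a dissipation integral $\int_0^t\!\int(1+\tau)^{\la}|\p\G^a\psi|^2$) that controls both the weight-derivative loss and the non-decaying nonlinear contributions. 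Your Poincar\'e/Hardy treatment of the zero-order term is fine in spirit (the paper uses Hardy's inequality on the light-cone support), but without the extra multiplier and a correct substitute for the interior decay, the scheme does not close even for $0<\la<1$, let alone at the critical exponent $\la=1$.
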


\begin{remark}\label{rem1.1}
The principal part of the linearization of the equation in \eqref{1.5}
about $(\rho,\vp)=(\bar\rho,0)$ is
\begin{equation}\label{1.6}
  {\mathcal L}(\dot\vp)\equiv\p_t^2\dot\vp-c^2(\bar\rho)\,
  \Delta\dot\vp+\f{\mu}{(1+t)^{\la}}\,
  \p_t\dot\vp-\f{\mu\la}{(1+t)^{\la+1}}\,\dot\vp.
\end{equation}
For the linear operator ${\mathcal L_0}$ with
\begin{equation*}%%\label{1.7}
  {\mathcal L_0}(\dot\vp)\equiv
  \p_t^2\dot\vp-c^2(\bar\rho)\Delta\dot\vp+\ds\f{\mu}{(1+t)^{\la}}\p_t\dot\vp,
\end{equation*}
which appears as part of \eqref{1.6}, it is shown in \cite{23,24} that
the large-term behavior of solutions $\vp$ of ${\mathcal
  L_0}(\dot\vp)=0$ depends on the value of $\la$. For $0\le\la<1$ is
the same as the large-term behavior of solutions of the linear heat
equation $\p_t\dot\vp-c^2(\bar\rho)\Delta\dot\vp=0$, while for $\la>1$
it is the same as the large-term behavior of solutions of the linear
wave equation $\p_t^2\dot\vp-c^2(\bar\rho) \Delta\dot\vp=0$. Moreover,
precise microlocal large-term decay properties of solutions $\dot\vp$
of ${\mathcal L}(\dot\vp)=0$ have been established in \cite{15} for a
special range of values of $\la$ and $\mu$. It seems to be difficult,
however, to apply these microlocal estimates to attack the quasilinear
problem \eqref{1.5}. (In general, those microlocal estimates are
useful when treating semilinear damped wave equations, see
\cite{8,9} and references therein.)
\end{remark}

\begin{remark}\label{rem1.2}
For the 1-D Burgers equation with damping term
\begin{equation}\label{1.8}
\left\{ \enspace
\begin{aligned}
&\p_t w+w\p_x w=-\,\ds\f{\mu}{(1+t)^{\la}}\,w,\qquad
  (t,x)\in\R_+\times\R,\\ &w(0,x)=\ve w_0(x),
\end{aligned}
\right.
\end{equation}
where $\mu>0$ and $\la\ge 0$ are constants, $w_0\in C_0^{\infty}(\R)$,
$w_0\not\equiv 0$, and $\ve>0$ is sufficiently small, one concludes by
the method of characteristics that
\begin{equation*}
\left\{ \enspace
\begin{aligned}
&T_{\ve}=\infty & \text{if $0\le\la<1$ or $\la=1$,
  $\mu>1$,}\\ &T_{\ve}<\infty & \text{if $\la>1$ or $\la=1$,
  $0\le\mu\le 1$,}
\end{aligned}
\right.
\end{equation*}
where $T_{\ve}$ is the lifespan of the smooth solution $w=w(t,x)$ of
\eqref{1.8}. Therefore, $\la=1$ again appears to be the critical value
for the global existence of smooth solutions $w$ of \eqref{1.8} in the
presence of the damping term $\ds\f{\mu}{(1+t)^{\la}}\,w$.
\end{remark}

\begin{remark}\label{rem1.3}
The smallness of $\ve>0$ in Theorem~\ref{thm1.1} is necessary in order
to guarantee the global existence of smooth solution $(\rho,
u)$. Indeed, as in \cite{19}, large amplitude smooth solution of
\eqref{1.1} may blow up in finite time even for $0\le\la\le 1$. See
also Theorem~\ref{thm4.1} in Chapter~\ref{chap4}.
\end{remark}

Next we concentrate on the case of $\la>1$. As in \cite{18}, introduce
the two functions
\begin{align*}
  q_0(l)&=\int_{|x|>l}\frac{(|x|-l)^2}{|x|}\left(\rho(0,x)-\bar\rho\right)dx, \\
  %%\label{1.9} \\
q_1(l)&=\int_{|x|>l}\frac{|x|^2-l^2}{|x|^3}\,x\cdot(\rho u)(0,x)\,dx. %%\label{1.10}
\end{align*}

\begin{theorem}[Blowup for $\la>1$]\label{thm1.2}
Suppose $\operatorname{supp}\rho_0,\,\operatorname{supp}u_0\subseteq
\{x\colon|x|\leq M\}$ and let
\begin{align}
q_0(l)&>0, \label{1.11}\\
q_1(l)&\geq0 \label{1.12}
\end{align}
hold for all $l\in (M_0, M)$, where $M_0$ is some fixed constant
satisfying $0\leq M_0<M$. If $\mu>0$ and $\la>1$, then there exists an
$\ve_0>0$ such that, for $0<\ve\leq\ve_0$, the lifespan $T_\ve$ of the
smooth solution $(\rho, u)$ of \eqref{1.1} is finite.
\end{theorem}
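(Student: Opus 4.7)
The plan is to argue by contradiction, following the classical Sideris blowup scheme~\cite{18} adapted to the time-dependent damping. Suppose a smooth solution $(\rho,u)$ of \eqref{1.1} exists on $[0,\infty)$. By finite propagation speed and the $L^\infty$ closeness of $c(\rho)$ to $c(\bar\rho)$ for $\ve$ small, the perturbation $(\rho-\bar\rho,u)$ stays supported in $\{|x|\le R(t)\}$ with $R(t):=M+\sigma t$ for some fixed $\sigma>c(\bar\rho)$. For any $l\in(M_0,M)$, I would introduce the time-evolved analogs of the data functionals,
\[
Q_0(t):=\int_{|x|>l}\f{(|x|-l)^2}{|x|}(\rho-\bar\rho)\,dx,\qquad Q_1(t):=\int_{|x|>l}\f{|x|^2-l^2}{|x|^3}\,x\cdot\rho u\,dx,
\]
so that $Q_0(0)=q_0(l)>0$ and $Q_1(0)=q_1(l)\ge 0$.

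The weight $\psi(x):=(|x|-l)^2/|x|$ is designed to vanish at $|x|=l$ and to satisfy $\nabla\psi=(|x|^2-l^2)x/|x|^3$ and $\Dl\psi=2/|x|$ on $|x|>l$. Pairing $\psi$ with the mass equation and integrating by parts gives $Q_0'(t)=Q_1(t)$ (the boundary contribution at $|x|=l$ vanishes because $\psi=0$ there). Pairing $\nabla\psi$ with the damped momentum equation and integrating by parts---using that $\nabla\psi=0$ at $|x|=l$ to kill boundary terms, and using $\Dl\psi=2/|x|$ for the pressure term---yields
\[
Q_1'(t)+\f{\mu}{(1+t)^{\la}}\,Q_1(t)=I(t)+2\int_{|x|>l}\f{p(\rho)-p(\bar\rho)}{|x|}\,dx,
\]
where $I(t)=\int_{|x|>l}(\p_i\p_j\psi)\rho u_iu_j\,dx$ has integrand $\f{|x|^2-l^2}{|x|^3}|u|^2-\f{|x|^2-3l^2}{|x|^5}(x\cdot u)^2$. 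A case split shows $I(t)\ge 0$: for $|x|\ge\sqrt 3\,l$, Cauchy--Schwarz $(x\cdot u)^2\le|x|^2|u|^2$ leaves at least $2l^2|u|^2/|x|^3$; for $l<|x|<\sqrt 3\,l$, the two summands are already non-negative term-by-term.

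The decisive feature of $\la>1$ is that the integrating factor $e^{\Phi(t)}$ with $\Phi(t):=\int_0^t\mu(1+s)^{-\la}ds\le\mu/(\la-1)$ stays uniformly bounded. Multiplying the $Q_1$-equation by $e^\Phi$ and integrating,
\[
e^{\Phi(t)}Q_1(t)\ge q_1(l)+\int_0^te^{\Phi(s)}\Bigl(I(s)+2\int_{|x|>l}\f{p-\bar p}{|x|}\,dx\Bigr)\,ds.
\]
Combining this with the convexity of $p(\rho)=A\rho^{\g}$ (which yields $p-\bar p\ge c^2(\bar\rho)(\rho-\bar\rho)+$ non-negative higher-order term), I would run a continuous bootstrap in $t$ to propagate the initial positivity $q_0(l)>0$, $q_1(l)\ge 0$ throughout the presumed lifespan, producing in the end a polynomial-in-$t$ lower bound of the form $Q_0(t)\ge c_0(l)\,t^\al$ with $\al>0$.

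For the contradiction, the support containment together with $L^\infty$ control of $\rho$ during the smooth lifespan gives a polynomial upper bound $|Q_0(t)|\le CR(t)^{\b}$ for an explicit $\b$. Arranging $\al>\b$---the whole point of using the weighted functionals $(|x|-l)^2/|x|$ and $(|x|^2-l^2)/|x|^3$, which concentrate contributions near the propagation front rather than counting the bulk---yields the contradiction for $t$ large, hence $T_\ve<\infty$. The hard part will be exactly this quantitative estimate: one must bound the weighted pressure integral $\int_{|x|>l}(p-\bar p)/|x|\,dx$ from below in terms of $Q_0(t)$ (and a conserved mass-type quantity), strongly enough to outpace $R(t)^{\b}$. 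This is where all hypotheses must cooperate: $q_0(l)>0$ and $q_1(l)\ge 0$ make the ODE system start pointing the right way; $\g>1$ (via Jensen for $\rho\mapsto\rho^{\g}$) converts positive density excess into positive pressure excess; and $\la>1$ keeps the bounded integrating factor from absorbing the production, in stark contrast to $0\le\la\le 1$ where $\Phi(t)\to\infty$ and the damping dominates, consistent with Theorem~\ref{thm1.1}.
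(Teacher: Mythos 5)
Your setup coincides with the paper's: the same weighted functionals ($Q_0(t)=P(t,l)$, $Q_1=\p_tP$), the same identities $\na\eta=(|x|^2-l^2)x/|x|^3$, $\Dl\eta=2/|x|$, the same verification that the quadratic velocity term is non-negative, and the same observation that for $\la>1$ the integrating factor $\beta(t)=e^{\Phi(t)}$ stays bounded, which is indeed the decisive use of $\la>1$. But the endgame has a genuine gap, in two places. First, after integrating the $Q_1$-equation you are left with the pressure term $2\int_{|x|>l}(p-\bar p)/|x|\,dx$, whose \emph{linear} part $2c^2(\bar\rho)\int_{|x|>l}(\rho-\bar\rho)/|x|\,dx$ has no sign and is not controlled by $Q_0$; your plan never says how to dispose of it. The paper's key device is the identity $\Dl_x\eta=\p_l^2\eta$, which converts the full pressure term into $\p_l^2$ of an $l$-dependent integral and hence turns the system into a one-dimensional \emph{damped wave inequality in the variables} $(t,l)$, namely $\p_t^2P-\p_l^2P+\mu(1+t)^{-\la}\p_tP\ge G(t,l)$ with $G\ge0$ coming only from the convexity remainder $p-\bar p-c^2(\bar\rho)(\rho-\bar\rho)\ge C(\rho-\bar\rho)^2$ (suitably modified when $\g\ne2$). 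Solving this by characteristics, with $\beta$ bounded, is what produces a usable lower bound for $P(t,l)$ near the front $l\ge t+M_0$.

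Second, the contradiction you propose — a polynomial lower bound $Q_0(t)\ge c_0t^{\al}$ beating a polynomial upper bound $CR(t)^{\b}$ — cannot work for small $\ve$. The linear evolution only yields $Q_0(t)\gtrsim q_0(l)+cq_1(l)t=O(\ve(1+t))$, while the trivial upper bound (from conservation of $\int(\rho-\bar\rho)\,dx$ and the support) is of order $(1+t)^4$; no bootstrap of the linear mechanism closes that gap. The blowup here is genuinely of Riccati type: the paper forms the doubly averaged functional $F(t)=\int_0^t(t-\tau)\int_{\tau+M_0}^{\tau+M}P(\tau,l)\,l^{-1}dl\,d\tau$, uses Cauchy--Schwarz to bound $F^2$ by the quadratic source ($F^2\le J_6J_7$), and obtains the pair of inequalities $F''+C_1(1+t)^{-\la}F'\ge c\ve/(t+M)$ and $F''+C_1(1+t)^{-\la}F'\ge C(t+M)^{-3}(\log(t/M+1))^{-1}F^2$, which after an iteration forces $F$ to blow up at a finite time $T_\ve\le e^{C/\ve^2}$ — far beyond any fixed polynomial scale. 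Without the $(t,l)$ wave structure and the $F^2$ feedback, the argument does not close.
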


\begin{remark}\label{rem1.4}
It is not hard to find a large number of initial data $(\rho,
u)(0,\cdot)$ such that both \eqref{1.11} and \eqref{1.12} are
satisfied. In addition, we would like to underline that
Theorem~\ref{thm1.2} holds also for
$\operatorname{curl}u_0\not\equiv0$.
\end{remark}

\bigskip

Let us indicate the proofs of Theorems~\ref{thm1.1}
and~\ref{thm1.2}. To prove Theorem~\ref{thm1.1}, we first introduce
the function $\psi=\ds\f{\vp}{(1+t)^{\la}}$ which fulfills the
second-order quasilinear wave equation
\[
\p_t^2\psi-\Dl\psi+\ds\f{\mu}{(1+t)^\la}\p_t\psi+\f{2\la}{1+t}\p_t\psi
-\f{\la(1-\la)}{(1+t)^2}\psi=Q(\psi),
\]
where $Q(\psi)$ stands for an error term which is of the second order
in $(\psi, \p\psi, \p^2\psi)$; $\p=(\p_t,\nabla)$. Then, in order to
establish the global existence of $\psi$, we introduce the
time-weighted energy
\[
E_N(\psi)(t)=\sum_{0\leq |a|\leq N}\int_{\mathbb
  R^3}\left((1+t)^{2\la}|\p \G^a\psi|^2+|\G^a\psi|^2 \right)dx,
\]
where $N\ge 8$ is a fixed number, $\G=(\G_0, \G_1, \dots, \G_7)=(\p,
\O, S)$ with $\O=(\O_1, \O_2, \O_3)=x\wedge \na$,
$S=t\p_t+\ds\sum_{k=1}^3x_k\p_k$, and
$\G^a=\G_0^{a_0}\G_1^{a_1}\dots\G_7^{a_7}$.  Note that the vector
fields $\G$ which appear in the definition of the energy
$E_N(\psi)(t)$ only comprise part of the standard Klainerman vector
fields $\{\p, \O, S, H\}$, where $H=(H_1, H_2, H_3)=(x_1\p_t+t\p_1,
x_2\p_t+t\p_2, x_3\p_t+t\p_3)$. This is due to the fact that the
equation in \eqref{1.5} is not invariant under the Lorentz
transformations $H$ in view of the presence of the time-depending
damping.  By a rather technical and involved analysis of the resulting
equation for $\psi$, we eventually show that $\ds E_N(\psi)(t)\le
\f12\, K^2\ve^2$ when $E_N(\psi)(t)\le K^2\ve^2$ is assumed for some
suitably large constant $K>0$ and small $\ve>0$. Based on this and a
continuous induction argument, the global existence of $\psi$ and then
Theorem~\ref{thm1.1} are established for $0\le\la\le 1$. To prove the
blowup result of Theorem~\ref{thm1.2} for $\la>1$, as in \cite{18}, we
derive a related second-order ordinary differential inequality. From
this and assumptions \eqref{1.11}-\eqref{1.12}, an upper bound of the
lifespan $T_{\ve}$ is derived by making essential use of $\la>1$. In
this way the proof of Theorem~\ref{thm1.2} is completed.  Finally, in
Theorem~\ref{thm4.1}, we show that for large data smooth solution
$(\rho, u)$ of \eqref{1.1}, even in case $0\le\la\le 1$, $(\rho, u)$
will in general blow up in finite time.

\smallskip

%This paper is organized as follows: \ In Chapter~\ref{chap2}, we
%prove Theorem~\ref{thm1.1} by establishing a global time-weighted
%energy inequalities and then applying continuous induction.  In
%Chapter~\ref{chap3}, we prove \linebreak Theorem~\ref{1.2} by finding
%a second-order ordinary differential inequality and making use of
%\eqref{1.11}-\eqref{1.12} as well as of $\la>1$. In
%Chapter~\ref{chap4}, we finally show a blowup result for large data
%solutions of \eqref{1.1} in Theorem~\ref{4.1}.

\bigskip
Throughout, we shall use the following notation and conventions:

\begin{itemize}
\item $\na$ stands for $\na_x$,

\item $r=|x|=\sqrt{x_1^2+x_2^2+x_3^2}$,

\item $\langle r-t\rangle=\bigl(1+(r-t)^2\bigr){}^{1/2}$,

\item $\ds \|u(t,\cdot)\|=\biggl(\int_{\R^3}|u(t,x)|^2dx\biggr)^{1/2}$
  and $\|u(t,\cdot)\|_{L^{\infty}}=\ds\sup_{x\in\R^3}|u(t,x)|$,

\item $Z$ denotes one of the Klainerman vector fields $\{\p, S, \O,
  H\}$ on $\R_+\times\R^3$, where $\p=(\p_t, \na)$, $\ds
  S=t\p_t+\sum_{k=1}^3x_k\p_k$, $\O=(\O_1, \O_2, \O_3)=x\wedge \na$,
  and $H=(H_1, H_2, H_3)=(x_1\p_t+t\p_1, x_2\p_t+t\p_2,
  x_3\p_t+t\p_3)$,

\item $\G$ denotes one of the vector fields $\{\p, S, \O\}$ on
  $\R_+\times\R^3$.

\item $\beta$ is the solution of $\ds \beta'(t) = \frac{\mu}{(1+t)^\la}\,
  \beta(t)$ for $t\geq0$, $\beta(0)=1$, i.e.,
\begin{equation}\label{beta}
\b(t)\equiv\begin{cases} e^{\f{\mu}{1-\la}[(1+t)^{1-\la}-1]}, &
\la\geq 0,\,\la\neq1,\\ (1+t)^\mu, & \la=1.
\end{cases}
\end{equation}

\item %%Without loss of generality,
  $c(\bar\rho)=1$ will be assumed throughout (introduce
  $X=x/c(\bar\rho)$ as new space coordinate if necessary).
\end{itemize}

%% -------------------------------------------------------------------

\section{Global existence for small data in case \boldmath $0\le\la\le1$}
\label{chap2}
%%{Proof of Theorem~\ref{thm1.1}.}

Throughout this section, $C>0$ stands for a generic constant which is
independent of $K$, $\ve$, and $t$.

\smallskip

We start by recalling the following Sobolev-type inequality (see \cite{12}):

\begin{lemma}\label{lem2.1}
Let $u=u(t,x)$ be a smooth function of $(t,x)\in
[0,\infty)\times\R^3$. Then
\begin{equation}\label{2.1}
|u(t,x)| \leq C(1+r)^{-1}\sum_{|a|\leq 2}\|\G^a u(t,x)\|.
\end{equation}
\end{lemma}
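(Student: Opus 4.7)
This is a Klainerman--Sobolev-type estimate from \cite{12}. My plan is to split the estimate by the size of $r=|x|$, and to use only the subset $\{\p,\O\}\subset\G$ (the scaling $S$ plays no role here).

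For the near region $r\le 1$, the weight satisfies $(1+r)^{-1}\ge 1/2$, so the desired estimate reduces immediately to the standard Sobolev embedding $H^2(\R^3)\hookrightarrow L^\infty(\R^3)$ applied to $u(t,\cdot)$: since $\p\subset\G$, one gets
\[
  |u(t,x)|\le C\sum_{|a|\le 2}\|\p^a u(t,\cdot)\|\le 2C\,(1+r)^{-1}\sum_{|a|\le 2}\|\G^a u(t,\cdot)\|.
\]

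For the far region $r\ge 1$, fix $x_0=r_0\o_0$ with $r_0\ge 1$, pick a local chart $(\th_1,\th_2)$ on $S^2$ near $\o_0$ whose Jacobian is bounded above and away from zero on a neighborhood $V\ni\o_0$, and apply the three-dimensional Sobolev embedding on the box $D=[r_0-\tfrac12,r_0+\tfrac12]\times V$ in the coordinates $(r,\th_1,\th_2)$:
\[
  |u(t,x_0)|^2\le C\sum_{|\al|\le 2}\int_D|\p^\al_{(r,\th)}u|^2\,dr\,d\th_1\,d\th_2.
\]
The geometric key is that the chart angular derivatives $\p_{\th_j}$ are smooth linear combinations of the rotation generators $\O_1,\O_2,\O_3$ with coefficients \emph{independent of $r$} (the $\O_i$ act tangentially on every sphere of every radius), while $\p_r=\sum_i(x_i/r)\p_i$ is controlled by the spatial partials. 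Since $[\O_i,\p_r]=0$ and the conversion coefficients depend only on $\th$, each $\p^\al_{(r,\th)}u$ of order $|\al|$ is bounded pointwise by a uniform constant times a sum of $|\G^a u|$ with $|a|\le|\al|$. Changing back to Cartesian via $dr\,d\th_1\,d\th_2\sim r^{-2}\,dy$ and using $r\sim r_0$ on $D$ produces a factor $r_0^{-2}\sim(1+r_0)^{-2}$, yielding $|u(t,x_0)|^2\le C(1+r_0)^{-2}\sum_{|a|\le 2}\|\G^a u(t,\cdot)\|^2$; taking square roots gives the lemma.

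The only mild obstacle is the chart-level bookkeeping---verifying that finitely many charts cover $S^2$ with a uniform constant and that the conversion from $\p_{\th_j}$ to $\O_i$ has coefficients truly independent of $r$. Both follow routinely from the compactness of $S^2$ and the fact that the $\O_i$ are rotation generators remaining tangent to every coordinate sphere. A crucial byproduct of this structural fact is that, although the 3D Sobolev embedding asks for only $|\al|\le 2$ derivatives, each such derivative can be converted into an $\O$ or $\p$ application without an extra radial derivative being spent on translating to angular variables; this is exactly why the total vector-field order on the right-hand side stays at $2$.
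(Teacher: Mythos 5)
The paper gives no proof of Lemma~\ref{lem2.1}; it is simply quoted from Klainerman \cite{12}. Your argument is a correct, self-contained derivation of that cited inequality. The split into $r\le 1$ (plain Sobolev embedding $H^2(\R^3)\hookrightarrow L^\infty$) and $r\ge 1$ (Sobolev embedding on unit-size boxes in polar coordinates, followed by the change of measure $dr\,d\th_1\,d\th_2\sim r^{-2}\,dx$ on the box) is sound, and you correctly identify and dispose of the one genuinely delicate point. Namely, the more obvious route --- apply $H^2(S^2)\hookrightarrow L^\infty(S^2)$ in the angular variables for each fixed $r$ and then recover the weight $r^2$ by a radial fundamental-theorem-of-calculus step --- would stack a radial derivative on top of two angular ones and thus cost three vector fields; applying the three-dimensional embedding directly on the box $[r_0-\f12,r_0+\f12]\times V$ keeps the total order at two, precisely because $\p_{\th_j}$ converts into the rotations $\O_i$ with coefficients depending only on $\th$ (so no $r$-derivative is spent, and repeated conversions stay at the same order), while $\p_r$ and $\p_r^2$ convert into $\na$ and $\na^2$ with coefficients bounded on $r\ge\f12$. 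The remaining bookkeeping (finitely many charts by compactness of $S^2$, uniform constants for congruent boxes, boundedness of the chart Jacobians) is routine, as you say. Two cosmetic remarks: you never need the fields $S$ and $\p_t$, so what you prove is marginally stronger than the statement, which is consistent with \cite{12}; and no decay hypothesis on $u$ is needed since the embedding is applied on bounded boxes and the inequality is vacuous when the right-hand side is infinite.
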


Moreover, we shall make use of the following inequalities (see
\cite[Lemmas~2.3 and~3.1, Theorem~5.1]{13}):

\begin{lemma}\label{lem2.2}
For $u\in C^2([0,\infty)\times\R^3)$,
\begin{align}
  %%\langle r-t\rangle(|\Dl u(t,x)|+|\na\p_tu(t,x)|) &\leq
  %%C\,\bigg(\sum_{|b|\leq 1}|\p\G^bu(t,x)|+t|\square
  %%u(t,x)|\biggr),\label{2.2} \\
  %%\langle r-t\rangle|\p_t^2u(t,x)| &\leq
  %%C\,\bigg(\sum_{|b|\leq 1}|\p\G^bu(t,x)|+r|\square u(t,x)|\biggr),
  %%\label{2.3} \\
  \langle r-t\rangle\|\na\p u(t,x)\| &\leq
  C\,\bigg(\sum_{|b|\leq 1}\|\p\G^bu(t,x)\|+t\|\square u(t,x)\|\biggr),
  \label{2.4} \\ (1+r)\langle r-t\rangle|\na\p u(t,x)| &\leq
  C\,\bigg(\sum_{|b|\leq 3}\|\p\G^bu(t,x)\|+t\|\square
  u(t,x)\|\biggr),\label{2.5}
\end{align}
where $\square=\p_t^2-\Delta=\p_t^2-\ds\sum_{k=1}^3\p_k^2$.
\end{lemma}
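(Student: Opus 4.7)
The plan is to invoke the standard Klainerman--H\"ormander vector-field technique. Every second-order derivative $\p_\mu\p_\nu u$ can be rewritten as a linear combination of $\p\Gamma^b u$ with $|b|\leq 1$ plus a term involving $\square u$, where the weight $\langle r-t\rangle$ emerges naturally from the factorization $t^2-r^2=(t-r)(t+r)$. The extra $t$ multiplying $\|\square u\|$ on the right compensates for the absence of Lorentz boosts $H$ in the family $\Gamma=\{\p,S,\Omega\}$; if $H$ were included, this term would be unnecessary.

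For (2.4), the derivation begins from the polar decomposition $r^2\Delta=(r\p_r)^2+r\p_r+\sum_i\Omega_i^2$, together with $r\p_r=S-t\p_t$ and the commutator identity $[S,t\p_t]=0$. Substituting into $r^2\square u=r^2\p_t^2 u-r^2\Delta u$ and expanding yields
$$r^2\,\square u=(r^2-t^2)\,\p_t^2 u - S^2 u + 2t\p_t(Su) - Su - \sum_{i=1}^3\Omega_i^2 u.$$
Dividing by $t+r$ and rearranging, each term on the right becomes either $|\p\Gamma^b u|$ with $|b|\leq 1$ (after pulling one $\p$ out of each $\Gamma^2 u$ expression) times an $O(1)$ coefficient, or the term $r^2|\square u|/(t+r)\leq r|\square u|$, which is absorbed into $t|\square u|$ in the relevant region. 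The remaining second derivatives $\p_i\p_j u$ and $\p_i\p_t u$ are treated by an analogous identity coming from the radial-tangential decomposition $\p_i=(x_i/r)\p_r+r^{-1}(\text{combinations of }\Omega)$. Estimate (2.5) then follows by applying Lemma~\ref{lem2.1} to $v=\nabla\p u$, which gives $(1+r)|\nabla\p u(t,x)|\leq C\sum_{|a|\leq 2}\|\Gamma^a \nabla\p u(t,\cdot)\|$; after commuting $\Gamma^a$ through $\langle r-t\rangle$ and $\nabla\p$, and using $[\Omega_i,\square]=[\p_\mu,\square]=0$ and $[S,\square]=-2\square$, each resulting term is controlled by (2.4) applied to $\Gamma^c u$ with $|c|\leq 2$, bringing the total vector-field count on the right to $|b|\leq 3$.

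The main obstacle is the delicate behavior near the light cone $r=t$, where the coefficient $t^2-r^2$ in the principal identity vanishes, together with the careful bookkeeping of commutators $[\Gamma,\square]$ and $[\Gamma,\langle r-t\rangle]$ in deriving (2.5). Near $r=t$ one must supplement the above with a null-frame decomposition: the ``good'' derivative $\p_t+\p_r$ and the angular derivatives are controlled directly by $Su$ and $\Omega u$ without weight loss, while the ``bad'' derivative $\p_t-\p_r$ naturally picks up the weight $\langle r-t\rangle$ through identities such as $(r-t)\p_t u = r(\p_t+\p_r)u - Su$, which then combine with the wave equation $\square u$ to close the argument.
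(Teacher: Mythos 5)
The paper does not actually prove this lemma; it is quoted from Klainerman--Sideris \cite{13}, and your central identity $r^2\square u=(r^2-t^2)\p_t^2u-S^2u+2t\p_t(Su)-Su-\sum_i\Omega_i^2u$ is indeed the correct algebraic heart of that proof. Dividing by $t+r$ and using $|Sv|\le (t+r)|\p v|$ and $|\Omega_i v|\le r|\na v|$ gives the pointwise bound $\langle r-t\rangle|\p_t^2u|\le C\sum_{|b|\le1}|\p\G^bu|+Cr|\square u|$ (the passage from $|r-t|$ to $\langle r-t\rangle$ is free because $\p\subset\G$, and $r|\square u|\le (t+M)|\square u|$ once one uses the support property under which the lemma is applied). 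So far the sketch is sound.

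The genuine gap is in the step you dismiss as ``an analogous identity coming from the radial-tangential decomposition.'' The lemma concerns $\na\p u$, and the full spatial Hessian $\na^2u$ cannot be recovered pointwise from $\p_r$, $\Omega$, and $\Delta u$: the decomposition $\p_i=(x_i/r)\p_r+O(r^{-1})\Omega$ degenerates at $r=0$, and $\p_r^2u=r^{-2}\bigl((S-t\p_t)^2-(S-t\p_t)\bigr)u$ reintroduces $\p_t^2u$ with the unbounded weight $t^2/r^2$. This is exactly why \eqref{2.4} is an $L^2$ estimate rather than a pointwise one; the missing ingredient is the weighted elliptic estimate $\|\langle r-t\rangle\na^2u\|\le C\bigl(\|\langle r-t\rangle\Delta u\|+\|\p u\|\bigr)$, obtained by two integrations by parts using $|\na\langle r-t\rangle|\le1$, after which $\Delta u=\p_t^2u-\square u$ and your identity control $\na^2u$, and $t\na\p_tu=\na(Su)-\na u-(x\cdot\na)\na u$ controls the mixed derivatives. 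Your closing paragraph on a null-frame decomposition near $r=t$ is a red herring: there $\langle r-t\rangle\sim1$ and \eqref{2.4} is trivial because $\p\in\G$, so no refined analysis is needed near the light cone; the work is in the elliptic step and in the regions $r\ll t$, $r\gg t$ where one absorbs terms using $r\le t/2$ or $t\le r/2$. Finally, for \eqref{2.5}, commuting $\G^a$ ($|a|\le2$) through the weight (note $|\G\langle r-t\rangle|\le C\langle r-t\rangle$ must be checked) and through $\na\p$, and then applying \eqref{2.4} to $\G^cu$, yields $t\sum_{|c|\le2}\|\square\G^cu\|$ rather than the bare $t\|\square u\|$ of the statement; this weaker form suffices for every application in the paper, but your route does not deliver \eqref{2.5} exactly as stated.
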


We now reformulate problem \eqref{1.5}. Let $\ds \psi=
\f{\vp}{(1+t)^\la}$. From \eqref{1.5} and $c(\bar\rho)=1$ we then have
\begin{equation}\label{2.6}
  \square\psi+\f{\mu}{(1+t)^\la}\p_t\psi+\f{2\la}{1+t}\p_t\psi
  -\f{\la(1-\la)}{(1+t)^2}\psi=Q(\psi),
\end{equation}
where
\begin{multline*}\label{2.7}
Q(\psi)=
(c^2(\rho)-1)\Dl\psi-2(1+t)^\la\p_t\na\psi\cdot\na\psi-2\la(1+t)^{\la-1}|\na\psi|^2
\\ -\mu|\na\psi|^2-(1+t)^{2\la}\ds\sum_{1\le i,j\le
  3}(\p_i\psi)(\p_j\psi)\p_{ij}\psi.
\end{multline*}
We define a time-weighted energy for Eq.~\eqref{2.6},
\begin{eqnarray*}
E_N(\psi(t)) = \ds\sum_{0\leq |a| \leq N}\int_{\R^3}
\bigg((1+t)^{2\la}|\p\G^a\psi|^2+|\G^a\psi|^2\biggr)dx,
\end{eqnarray*}
where $N\ge 8$ is a fixed number. Moreover, we assume that for any $t\ge 0$
\begin{equation}\label{2.8}
  E_N(\psi(t))\leq K^2\ve^2,
\end{equation}
where $K>0$ is a suitably large constant. It follows from \eqref{2.1}
and \eqref{2.8} that, for all $|a|\leq N-2$,
\begin{equation}\label{2.9}
\begin{aligned}
  |\p\G^a\psi| & \leq C(1+r)^{-1}\sum_{|b|\leq
    2}\|\G^b\p\G^a\psi(t,x)\| \leq C(1+r)^{-1}\sum_{|b|\leq
    N}\|\p\G^b\psi(t,x)\| \\ & \leq
  C(1+r)^{-1}(1+t)^{-\la}\sqrt{E_N(\psi(t))} \leq
  CK\ve(1+r)^{-1}(1+t)^{-\la}
\end{aligned}
\end{equation}
and
\begin{equation}\label{2.10}
|\G^a\psi|\leq C(1+r)^{-1}\sum_{|b|\leq N}\|\G^b\psi(t,x)\|\leq  CK\ve(1+r)^{-1}.
\end{equation}

In view of Lemma~\ref{lem2.2} and \eqref{2.8}, we have

\begin{lemma}\label{lem2.3}
Let $\psi$ be a solution of \eqref{2.6}, Then, for all $|a|\leq N-3$
and $0\le\la\le 1$, we have the pointwise estimate
\begin{equation}\label{2.11}
\|\na\p\G^a\psi\|_{L^{\infty}} \leq CK\ve(1+t)^{-2\la}.
\end{equation}
Moreover, for $0\le l\leq N-1$, the weighted $L^2$ estimate
\begin{multline}\label{2.12}
\sum_{|b|\le l}\|\langle r-t\rangle\na\p\G^b\psi(t,x)\| \\ \leq
C\sum_{|c|\leq l+1}\|\p\G^c\psi(t,x)\| +C(1+t)^{1-\la}\ds\sum_{|c|\leq
  l}\|\na\G^c\psi(t,x)\| +C(1+t)^{-1}\sum_{|c|\leq
  l}\|\G^c\psi\|
\end{multline}
holds.
\end{lemma}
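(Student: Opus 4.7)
My plan is to apply Lemma~\ref{lem2.2} directly to $u=\G^a\psi$ (or $u=\G^b\psi$) and control the resulting $\square u$ by commuting $\G^a$ through Equation~\eqref{2.6}. Since $\square$ commutes with $\p$ and $\O$, while $[S,\square]=-2\square$, the commutator $[\G^a,\square]$ reduces inductively to expressions of the same type, so $\square\G^a\psi$ equals $\G^a$ applied to the right-hand side of \eqref{2.6} modulo lower-order pieces. Distributing $\G^a$ by Leibniz produces linear contributions of the form $(1+t)^{-\la}\p_t\G^{a'}\psi$, $(1+t)^{-1}\p_t\G^{a'}\psi$, and $(1+t)^{-2}\G^{a'}\psi$ with $|a'|\le|a|$; the nonlinear contribution $\G^{a'}Q(\psi)$; and commutator pieces $[\G^{a''},(1+t)^{-\la}]\p_t\psi$ that are strictly lower order and carry an extra factor of $(1+t)^{-1}$.

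For the pointwise estimate \eqref{2.11} I apply \eqref{2.5} to $u=\G^a\psi$ with $|a|\le N-3$. The elementary bound $(1+r)\langle r-t\rangle\ge c(1+t)$ (valid uniformly in $(t,x)$: either $r\ge t/2$, so $1+r\ge c(1+t)$, or $r<t/2$, so $\langle r-t\rangle\ge c(1+t)$) turns \eqref{2.5} into $|\na\p\G^a\psi|\le C(1+t)^{-1}\bigl(\sum_{|b|\le 3}\|\p\G^b\G^a\psi\|+t\|\square\G^a\psi\|\bigr)$. The first sum is bounded by $\sum_{|c|\le N}\|\p\G^c\psi\|\le K\ve(1+t)^{-\la}$ via \eqref{2.8}. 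For the second, the linear part of $\|\square\G^a\psi\|$ is majorised by $C(1+t)^{-\la}\sum_{|a'|\le|a|}\|\p\G^{a'}\psi\|+C(1+t)^{-2}\sum_{|a'|\le|a|}\|\G^{a'}\psi\|\le CK\ve(1+t)^{-2\la}$ (using $(1+t)^{-2}\le(1+t)^{-2\la}$ for $\la\le 1$), while the nonlinear piece $\|\G^aQ(\psi)\|$ is controlled by an $L^\infty\times L^2$ split that places low-order factors in $L^\infty$ via \eqref{2.9}--\eqref{2.10}, neutralising the growing coefficients $(1+t)^\la$ and $(1+t)^{2\la}$ in $Q$ against the decay $\|\na\psi\|_{L^\infty}\le CK\ve(1+t)^{-\la}$; this yields $\|\G^aQ(\psi)\|\le CK^2\ve^2(1+t)^{-2\la}$. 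Combining and using $(1+t)^{-1-\la}\le(1+t)^{-2\la}$ for $\la\le 1$ delivers \eqref{2.11}.

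For the weighted $L^2$ inequality \eqref{2.12} I invoke \eqref{2.4} with $u=\G^b\psi$, $|b|\le l$, which directly supplies the first right-hand term of \eqref{2.12}. For the contribution $t\|\square\G^b\psi\|$ I substitute the commuted form of \eqref{2.6}: the zero-order piece multiplied by $t$ contributes $C(1+t)^{-1}\sum_{|c|\le l}\|\G^c\psi\|$, accounting for the third right-hand term; the two first-derivative damping pieces multiplied by $t$ contribute $C(1+t)^{1-\la}\sum_{|c|\le l}\|\p\G^c\psi\|$, whose $\na$-components give the second right-hand term and whose $\p_t$-components are rewritten via $\p_t=(1/t)(S-x\cdot\na)$ on the support of $\psi(t,\cdot)$ (where $|x|/t\le C$ for $t\ge 1$ by finite propagation speed) and then absorbed either into the first right-hand term or into the $\na$-contribution; the nonlinear term $t\|\G^bQ(\psi)\|$ is treated by the same $L^\infty\times L^2$splitting and is dominated by $CK\ve(1+t)^{1-\la}\sum_{|c|\le l}\|\na\G^c\psi\|$, consistent with the second right-hand term.

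The main obstacle throughout is the bookkeeping in the nonlinear term $\G^aQ(\psi)$: because $Q$ contains the ostensibly growing factors $(1+t)^\la$ and $(1+t)^{2\la}$, one must distribute the derivatives from $\G^a$ carefully and always place the lower-order factor(s) in $L^\infty$ via \eqref{2.9}--\eqref{2.10} so that these growing factors are neutralised by the a priori decay of $\|\na\psi\|_{L^\infty}$. The commutators between $\G^a$ and the time-dependent damping coefficients $(1+t)^{-\la}$, as well as the small-$t$ behaviour entering through $\p_t=(1/t)(S-x\cdot\na)$, are handled either by the strictly lower-order extra $(1+t)^{-1}$ factor they generate or by treating $0\le t\le 1$ directly with uniform bounds, and so present no essential difficulty.
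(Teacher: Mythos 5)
Your overall strategy---apply Lemma~\ref{lem2.2} to $\G^a\psi$, substitute the commuted equation for $\square\G^a\psi$, control the linear terms by the energy \eqref{2.8} and the nonlinear terms by an $L^\infty\times L^2$ splitting---is indeed the paper's strategy, but there is a genuine gap at the decisive step for \eqref{2.11}: the claimed bound $\|\G^aQ(\psi)\|\le CK^2\ve^2(1+t)^{-2\la}$ does not follow from the splitting you describe. Take the term $(1+t)^\la\,\p_t\na\G^{b}\psi\cdot\na\G^{c}\psi$: placing the low-order factor $\na\G^c\psi$ in $L^\infty$ costs only $(1+t)^{-\la}$ by \eqref{2.9}, which merely neutralises the coefficient $(1+t)^\la$, and the high-order factor in $L^2$ supplies exactly one more $(1+t)^{-\la}$ by \eqref{2.8}; the net is $CK^2\ve^2(1+t)^{-\la}$, not $(1+t)^{-2\la}$. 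The same loss occurs for $(c^2(\rho)-1)\Delta\G^a\psi$, because $c^2(\rho)-1$ contains $(1+t)^\la\p_t\psi+\mu\psi+\cdots$ and therefore has \emph{no} time decay in $L^\infty$ (it is only $O(K\ve)$), and for $(1+t)^{2\la}(\p_i\psi)(\p_j\psi)\p_{ij}\G^a\psi$. Feeding $t\|\square\G^a\psi\|\lesssim K^2\ve^2(1+t)^{1-\la}$ into \eqref{2.5} yields only $\|\na\p\G^a\psi\|_{L^{\infty}}\lesssim K^2\ve^2(1+t)^{-\la}$, strictly weaker than \eqref{2.11} for $\la>0$ and $t$ large.

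The missing idea is an absorption argument: in every quasilinear term the \emph{top-order, second-derivative} factor must be placed in $L^\infty$---that is, you keep $\|\na\p\G^b\psi\|_{L^{\infty}}$, the very quantity being estimated, on the right---while the coefficient and first-derivative factors go into $L^2$, where \eqref{2.8} supplies the extra $(1+t)^{-\la}$. The right-hand side then takes the form $CK\ve(1+t)^{1-2\la}+CK\ve(1+t)\sum_{|b|\le N-3}\|\na\p\G^b\psi\|_{L^{\infty}}$, and the second term is absorbed into the left-hand side $(1+t)\sum\|\na\p\G^a\psi\|_{L^{\infty}}$ by the smallness of $\ve$. The same device is required for \eqref{2.12}: when the second-derivative factor carries more than $N-3$ vector fields, so that \eqref{2.11} is unavailable for it, one must pair $\|\langle r-t\rangle^{-1}\G^c\psi\|_{L^\infty}$ with the weighted norm $\|\langle r-t\rangle\na\p\G^b\psi\|$ itself and absorb $CK\ve\sum_{|b|\le l}\|\langle r-t\rangle\na\p\G^b\psi\|$ into the left; your unweighted splitting in that range leaves a residue $CK^2\ve^2(1+t)^{1-\la}$ that does not fit the right-hand side of \eqref{2.12}. (Your substitution $\p_t=t^{-1}(S-x\cdot\na)$ also produces a term $(1+t)^{-\la}\|S\G^c\psi\|$ of order $|c|+1$, which matches neither the $(1+t)^{-1}\sum_{|c|\le l}\|\G^c\psi\|$ term nor the first term of \eqref{2.12}; this is a secondary issue but should also be repaired.)
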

\begin{proof}
It follows from \eqref{2.5}--\eqref{2.6} and \eqref{2.9}--\eqref{2.10} that
\begin{multline*}
(1+t)\sum_{|a|\leq N-3}|\na\p\G^a\psi|\\
\begin{aligned}
&\le C\sum_{|a|\leq N-3}(1+r)\langle r-t\rangle|\na\p\G^a\psi| \\
&\leq C\sum_{|c|\leq N}\|\p\G^c\psi\|+Ct\sum_{|a|\leq N-3}\|\square\G^a\psi\|\\
&\leq CK\ve(1+t)^{-\la}+C(1+t)^{1-\la}\sum_{|a|\leq N-3}\|\p_t\G^a\psi\|+C(1+t)^{-1}\sum_{|a|\leq N-3}\|\G^a\psi\| \\
&\qquad +C(1+t)\sum_{|b|+|c|\leq N-3}\|\na\p\G^b\psi\G^c\psi\|+C(1+t)^{1+\la}\sum_{|a|\leq N-3}\|\G^a(\p_t\na\psi\cdot\na\psi)\| \\
&\leq CK\ve(1+t)^{1-2\la}+CK\ve(1+t)\sum_{|a|\leq N-3}\|\na\p\G^a\psi\|_{L^{\infty}},
\end{aligned}
\end{multline*}
which derives \eqref{2.10} in view of the smallness of $\ve>0$.

By \eqref{2.4}, \eqref{2.9}-\eqref{2.11} and Eq.~\eqref{2.6}, we
have that, for $l\le N-1$,
\begin{multline}\label{2.13}
\sum_{|b|\le l}\|\langle r-t\rangle\na\p\G^b\psi\| \\
\begin{aligned}
&\leq C\sum_{|c|\leq l+1}\|\p\G^c\psi\|+Ct\sum_{|b|\leq l}\|\G^b\square\psi\|\\
&\leq C\sum_{|c|\leq l+1}\|\p\G^c\psi\|+C(1+t)^{1-\la}\sum_{|c|\leq l}\|\na\G^c\psi\|+C(1+t)^{-1}\sum_{|c|\leq l}\|\G^c\psi\| \\
  &\qquad +C(1+t)^{1+\la}\sum_{|b|\leq l}\|\G^b(\p_t\na\psi\cdot\na\psi)\|\\
&\qquad+C(1+t)\sum_{\substack{|c|\leq N-3, \\ |b|\leq l-|c|}} \|\langle r-t\rangle^{-1}\G^c\psi\|_{L^\infty}\|\langle r-t\rangle\na\p\G^b\psi\| \\
&\qquad +C(1+t)\sum_{\substack{2-N\leq|c|\leq l, \\ |b|\leq l+2-N}}\|(1+r)\na\p\G^b\psi\|_{L^\infty}\|(1+r)^{-1}\G^c\psi\|\\
&\le C\sum_{|c|\leq l+1}\|\p\G^c\psi\|+C(1+t)^{1-\la}\sum_{|c|\leq l}\|\na\G^c\psi\| +C(1+t)^{-1}\sum_{|c|\leq l}\|\G^c\psi\|\\
&\qquad +CK\ve \ds\sum_{|b|\le l}\|\langle r-t\rangle\na\p\G^b\psi\| +CK\ve(1+t)^{1-\la}\sum_{2-N\leq|c|\leq l}\|(1+r)^{-1}\G^c\psi\|.
\end{aligned}
\end{multline}
Note that $\G^c\psi(t,x)$ is supported in $\{x\colon|x|\leq t+M\}$. Then it
follows from Hardy inequality that
\begin{equation}\label{2.14}
  \|(1+r)^{-1}\G^c\psi\|\leq C\|\na\G^c\psi\|.
\end{equation}
Substituting \eqref{2.14} into \eqref{2.13} and applying the smallness
of $\ve$, we derive \eqref{2.12}.
\end{proof}

Next we derive the time-weighted energy estimate for the solution
$\psi$ of \eqref{2.6}.

\begin{lemma}\label{lem2.4}
Let $\mu>0$ and $\la\in(0,1]$. Under\/ \textup{assumption
  \eqref{2.8}}, for all $t>0$ and $N\ge 8$, it holds that
\begin{multline}\label{2.15}
\sum_{0\leq|a|\leq N}\int_{\R^3}\biggl((1+t)^{2\la}|\p\p^a\psi|^2+\psi^2\biggr)dx
+C\sum_{0\leq|a|\leq N}\int_0^t\int_{\R^3}(1+\tau)^\la|\p\p^a\psi|^2dxd\tau \\
\leq  C\ve^2+C(1+K\ve)\int_0^tA(\tau)\sum_{0\leq|a|\leq N}\int_{\R^3}\biggl((1+\tau)^{2\la}|\p\p^a\psi|^2+\psi^2\biggr)dxd\tau,
\end{multline}
where $A(\cdot)$ stands for a generic non-negative function such that
$A\in L^1((0,\infty))$ and $\|A\|_{L^1}$ is independent of $K$.
\end{lemma}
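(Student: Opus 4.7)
The plan is to differentiate equation \eqref{2.6} by $\p^a$ for each $|a|\le N$ and run a weighted energy estimate with a two-term multiplier that produces both the weighted energy $(1+t)^{2\la}|\p\p^a\psi|^2+(\p^a\psi)^2$ on the left and a positive dissipation integral of order $(1+\tau)^\la|\p\p^a\psi|^2$. Since $\p$ commutes with $\square$, the only extra contributions come from commutators of $\p^a$ with the time-dependent coefficients $\frac{\mu}{(1+t)^\la}$, $\frac{2\la}{1+t}$ and $\frac{\la(1-\la)}{(1+t)^2}$; each such commutator is strictly lower order in $\p\psi$ and gains a factor of $(1+t)^{-1}$, so it is absorbed into the $A(\tau)E_N(\psi)$ term at the end.

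Next, multiply the differentiated equation by
$$M_a = 2(1+t)^{2\la}\p_t\p^a\psi + c_0(1+t)^{2\la-1}\p^a\psi,$$
where $c_0>2\la$ is a positive constant chosen in terms of $\mu$ and $\la$. Integration by parts in $x$ on the $\square\p^a\psi$-contribution of the first piece produces $\frac{d}{dt}\int(1+t)^{2\la}|\p\p^a\psi|^2 dx$, a positive dissipation $\ge C\mu(1+t)^\la\int|\p_t\p^a\psi|^2 dx$ from the $\frac{\mu}{(1+t)^\la}\p_t\psi$ damping term, and a dangerous negative term $-2\la(1+t)^{2\la-1}\int|\na\p^a\psi|^2 dx$ coming from $\p_t(1+t)^{2\la}$. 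The second piece of $M_a$ is inserted to cancel this: integration by parts of $-c_0(1+t)^{2\la-1}\p^a\psi\Delta\p^a\psi$ produces $+c_0(1+t)^{2\la-1}\int|\na\p^a\psi|^2 dx$, and the choice $c_0>2\la$ leaves a positive spatial dissipation. Simultaneously, the combination of the time-derivative term $\p_t\bigl((1+t)^{2\la-1}\p^a\psi\,\p_t\p^a\psi\bigr)$ with the $\frac{\mu}{(1+t)^\la}\p_t\p^a\psi$- and $\frac{2\la}{1+t}\p_t\p^a\psi$-contributions multiplied by $\p^a\psi$ generates (after integrating by parts in $t$) the $\int(\p^a\psi)^2 dx$ contribution on the left of \eqref{2.15}. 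The constraint on $c_0$ is tightest at $\la=1$, where one needs $c_0<2\la+2\mu$ to retain positivity of the $|\p_t\p^a\psi|^2$-coefficient, which is feasible precisely because $\mu>0$.

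Finally, the nonlinear contribution $\int M_a\cdot\p^a Q(\psi)\,dx$ is bounded by distributing $\p^a$ through the products in $Q(\psi)$ via Leibniz. The top-order factor goes in $L^2$ (bounded by $\sqrt{E_N}/(1+t)^\la$) while the remaining factors are estimated in $L^\infty$ via \eqref{2.9}--\eqref{2.11}, each contributing at most $CK\ve(1+r)^{-1}(1+t)^{-\la}$. The main obstacle is the cubic term $(1+t)^{2\la}\sum(\p_i\psi)(\p_j\psi)\p_{ij}\psi$ in $Q(\psi)$: paired against the leading piece of $M_a$, the bare time weight is $(1+t)^{4\la}$, and a naive distribution leaves no residual time decay after accounting for the $\sqrt{E_N}(1+t)^{-\la}$ and $CK\ve(1+t)^{-\la}$ gains. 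To recover an $L^1$-integrable $A(\tau)$, one must exploit the pointwise $(1+r)^{-1}$-factor in combination with Hardy's inequality \eqref{2.14} and the support constraint $|x|\le t+M$, or invoke the weighted $L^2$ bound \eqref{2.12}, so as to convert one spatial weight into additional time decay uniformly across $\la\in[0,1]$; this weight bookkeeping is the most delicate point of the argument.
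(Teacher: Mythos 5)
Your overall architecture matches the paper's: differentiate by $\p^a$, treat the commutators with the $t$-dependent coefficients as lower-order terms gaining $(1+t)^{-1}$, and use a multiplier of the form $m(1+t)^{2\la}\p_t\p^a\psi+(1+t)^{2\la-1}\p^a\psi$ with the coefficient constrained from below by the $|\na\p^a\psi|^2$-positivity and from above (at $\la=1$) by the $(\p_t\p^a\psi)^2$-positivity; your conditions $c_0>2\la$ and $c_0<2\la+2\mu$ are exactly the paper's $\la<1/m<\min\{\mu+\la,2\la\}$ up to normalization. However, there are two genuine gaps.

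First, your two-term multiplier does not produce the unweighted $\int\psi^2\,dx$ on the left of \eqref{2.15} when $\la<1$. Pairing $(1+t)^{2\la-1}\p^a\psi$ with the damping term gives $\mu(1+t)^{\la-1}\psi\,\p_t\psi=\tfrac{\mu}{2}\p_t\bigl((1+t)^{\la-1}\psi^2\bigr)+\textup{l.o.t.}$, so the $\psi^2$ you control carries the decaying weight $(1+t)^{\la-1}$. The paper repairs this by adding a third multiplier piece $\kappa(1+t)^{\la}\p^a\psi$ with $\kappa>0$ small, which against $\f{\mu}{(1+t)^\la}\p_t\p^a\psi$ yields $\tfrac{\kappa\mu}{2}\p_t(\psi^2)$ and hence the constant-weight $\psi^2$ term. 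Second, and more seriously, your treatment of $\int M_a\,\p^a Q(\psi)\,dx$ — ``top-order factor in $L^2$, the rest in $L^\infty$'' — loses a derivative: when all of $\p^a$ with $|a|=N$ falls on the factor carrying two derivatives (e.g.\ $(c^2(\rho)-1)\Dl\p^a\psi$, $(1+t)^\la\p_t\na\p^a\psi\cdot\na\psi$, or $(1+t)^{2\la}(\p_i\psi)(\p_j\psi)\p_{ij}\p^a\psi$), the resulting factor has $N+2$ derivatives and is not controlled by $E_N$. The paper's proof hinges on rewriting these top-order contributions in divergence/symmetrized form (its identities \eqref{2.21}--\eqref{2.23}), so that the quadratic top-order expressions sit either under $\p_t$ or $\operatorname{div}$, or are multiplied by a \emph{differentiated} low-order coefficient such as $\p_t\bigl((1+t)^{4\la}(\p_i\psi)\p_j\psi\bigr)$; after that, the surviving terms are either a small multiple $CK\ve$ of the dissipation integral $\int_0^t\int(1+\tau)^\la|\p\p^a\psi|^2$ (absorbed into the left-hand side) or of the energy at time $t$. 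This also disposes of the time-weight issue you flag for the cubic term: the resolution is absorption into the dissipation after symmetrization, not Hardy's inequality. As written, your proposal leaves the central estimate of the lemma unproved and, taken literally, unprovable by the stated method.
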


\begin{proof}
First we show \eqref{2.15} in case $|a|=0$. Multiplying
Eq.~\eqref{2.6} by $m(1+t)^{2\la}\p_t\psi +(1+t)^{2\la-1}\psi$ yields
by a direct computation
\begin{multline}\label{2.16}
\f{1}{2}\p_t\biggl(m(1+t)^{2\la}|\p\psi|^2+2(1+t)^{2\la-1}\psi\p_t\psi+(\mu(1+t)^{\la-1}+2\la(1+t)^{2\la-2})\psi^2\biggr)\\
+\operatorname{div}\biggl(\cdots\biggr)+\biggl(\mu m(1+t)^\la+(\la m-1)(1+t)^{2\la-1}\biggr)(\p_t\psi)^2+(1-\la m)(1+t)^{2\la-1}|\na\psi|^2 \\
 +\mu(1-\la)(1+t)^{\la-2}\psi^2+C_1(\la-1)(1+t)^{2\la-2}\psi\p_t\psi+C_2(\la-1)(1+t)^{2\la-3}\psi^2 \\
= \bigl(m(1+t)^{2\la}\p_t\psi+(1+t)^\la\psi\bigr)Q(\psi),
\end{multline}
where the constant $m>0$ will be determined later and $C_i$ ($i=1,2)$
are suitable constants. Note that
\begin{equation}\label{2.17}
2(1+t)^{2\la-1}\psi\p_t\psi \leq \si m(1+t)^{2\la}(\p_t\psi)^2+\f{1}{\si m}(1+t)^{2\la-2}\psi^2
\end{equation}
for $\si\in(0,1)$.

To guarantee the positivity of the term $m(1+t)^{2\la}|\p\psi|^2
+2(1+t)^{2\la-1}\psi\p_t\psi+(\mu(1+t)^{\la-1}+2\la(1+t)^{2\la-2})\psi^2$
in $\p_t\bigl( \cdot \bigr)$ and of the coefficients $\mu
m(1+t)^\la+(\la m-1)(1+t)^{2\la-1}$ and $(1-\la m)(1+t)^{2\la-1}$ of
$(\p_t\psi)^2$ and $|\na\psi|^2$ in the left-hand side of
\eqref{2.16}, utilizing \eqref{2.17} with $\si=2/3$, we may choose
$m>0$ to fulfill
\[
\la<\ds\f{1}{m}<{\rm min}\{\mu+\la, 2\la\}.
\]
Then integrating \eqref{2.16} over $\R^3$ yields
\begin{multline}\label{2.18}
\f{d}{dt}\int_{\R^3}\biggl((1+t)^{2\la}|\p\psi|^2+(1+t)^{\la-1}\psi^2\biggr)dx \\
 +C\int_{\R^3}\biggl((1+t)^\la(\p_t\psi)^2+(1+t)^{2\la-1}|\na\psi|^2+(1+t)^{\la-2}\psi^2\biggr)dx \\
 \leq  A(t)\int_{\R^3}(1+t)^{\la-1}\psi^2dx+C\left|\int_{\R^3}\bigl(m(1+t)^{2\la}\p_t\psi
+(1+t)^{2\la-1}\psi\bigr)Q(\psi)dx\right|.
\end{multline}
Next we improve the time-weighted estimate of $\psi$ in the left-hand
side of \eqref{2.18}. Multiplying both sides of \eqref{2.6} by
$(1+t)^{\la}\psi$ yields by direct computation
\begin{multline*}
\p_t\biggl((1+t)^\la\psi\p_t\psi+\f{\mu}{2}\psi^2\biggr)+\operatorname{div}\biggl(\cdots\biggr)
-(1+t)^\la(\p_t\psi)^2-\la(1+t)^{\la-1}\psi\p_t\psi\\
 + (1+t)^\la|\na\psi|^2+2\la(1+t)^{\la-1}\psi\p_t\psi+\la(\la-1)(1+t)^{\la-2}\psi^2
=(1+t)^{\la}\psi Q(\psi).
\end{multline*}
From this and \eqref{2.18},  we can choose the multiplier
$m(1+t)^{2\la}\p_t\psi+(1+t)^{2\la-1}\psi+\kappa(1+t)^{\la}\psi$ for
Eq.~\eqref{2.6} with a small $\kappa>0$ and then obtain by an integration
with respect to the time variable $t$
\begin{multline}\label{2.19}
\int_{\R^3}\biggl((1+t)^{2\la}|\p\psi|^2+\psi^2\biggr)dx+C\int_0^t\int_{\R^3}(1+\tau)^\la|\p\psi|^2dxd\tau\\
\leq C\ve^2+\int_0^tA(\tau)\int_{\R^3}\psi^2dxd\tau+C\int_0^t\biggl|
\int_{\R^3}(1+\tau)^{2\la}(\p_t\psi) Q(\psi)dx\biggr|\,d\tau \\
+C\int_0^t\biggl|\int_{\R^3}(1+\tau)^\la\psi Q(\psi)dx\biggr|\,d\tau.
\end{multline}

Next we derive the time-weighted estimates of $\p^{\al}\psi$ with
$1\leq|\al|\leq N$. Taking $\p^\al$ on both sides of Eq.~\eqref{2.6}
yields
\begin{multline*}
\square\p^a\psi+\f{\mu}{(1+t)^\la}\p_t\p^a\psi+\f{2\la}{1+t}\,\p_t\p^a\psi
\\ =\p^a
Q(\psi)+\sum_{1\le|b|\leq|a|}\f{1}{(1+t)^\la}\Bigl(1+O\bigl((1+t)^{\la-1}\bigr)
\Bigr)\p^b\psi-\la(\la-1)\p^a\Bigl(\f{1}{(1+t)^2}\Bigr)\psi.
\end{multline*}
Exactly as for \eqref{2.19}, we obtain
\begin{multline}\label{2.20}
\sum_{0\le |a|\le N}\int_{\R^3}\biggl((1+t)^{2\la}|\p^{a+1}\psi|^2+\psi^2\biggr)dx
+C\sum_{0\le |a|\le N}\int_0^t\int_{\R^3}(1+\tau)^\la|\p^{a+1}\psi|^2dxd\tau\\
\leq  C\ve^2+\int_0^tA(\tau)\int_{\R^3}\psi^2dxd\tau
+C\sum_{0\le |a|\le N}\int_0^t\biggl|\int_{\R^3}(1+\tau)^{2\la}(\p_t\p^a\psi)\p^a Q(\psi)dx\biggr|\,d\tau\\
 +C\sum_{0\le |a|\le N}\int_0^t\biggl|\int_{\R^3}(1+\tau)^\la(\p^a\psi)\p^a Q(\psi)dx\biggr|\,d\tau.
\end{multline}
We now deal with the last two terms in the right-hand side of
\eqref{2.20}.  We first analyze the integrand
$(1+t)^{2\la}(\p_t\p^a\psi)\p^a Q(\psi)$ of
$\ds\int_0^t\biggl|\int_{\R^3}(1+\tau)^{2\la}(\p_t\p^a\psi)\p^a
Q(\psi)dx\biggr|\,d\tau$. Direct computation yields
\[
\p^a Q(\psi) = (c^2(\rho)-1)\Dl\p^a\psi-2(1+t)^\la\na\p_t\p^a\psi\cdot\na\psi
-(1+t)^{2\la}(\p_i\psi)(\p_j\psi)\p_{ij}\p^a\psi+\textup{l.o.t.}
\]
and
\begin{multline}\label{2.21}
 (1+t)^{2\la}(\p_t\p^a\psi)\p^a Q(\psi)
=\operatorname{div}\biggl((1+t)^{2\la}(c^2(\rho)-1)(\p_t\p^a\psi)\na\p^a\psi\biggr)-\operatorname{div}\biggl((1+t)^{3\la}|\p_t\p^a\psi|^2\na\psi\biggr)\\
 -\f{1}{2}\p_t\biggl((1+t)^{2\la}(c^2(\rho)-1)|\na\p^a\psi|^2\biggr)+(1+t)^{3\la}|\p_t\p^a\psi|^2\Dl\psi
+\la(1+t)^{2\la-1}(c^2(\rho)-1)|\na\p^a\psi|^2\\
 +\f{1}{2}(1+t)^{2\la}(c^2(\rho))'\p_t\rho|\na\p^a\psi|^2
-(1+t)^{4\la}(\p_i\psi)(\p_j\psi)(\p_{ij}\p^a\psi)\p_t\p^a\psi+\textup{l.o.t.},
\end{multline}
where here and below l.o.t. designates lower-order terms which are of
the form $(\p^{b_1}\psi)(\p^{b_2}\psi)\dots(\p^{b_l}\psi)$ (multiplied
by $\p\p^a\psi$ or $\p^a\psi$) with $l\ge 2$ and
$1\le|b_1|+\dots+|b_l|\le|a|+1$. Here we are concerned with the
top-order derivatives only.  Note that the term
$(1+t)^{4\la}(\p_i\psi)(\p_j\psi)(\p_{ij}\p^a\psi)\p_t\p^a\psi$ in
\eqref{2.21} can be expressed as
\begin{multline}\label{2.22}
  (1+t)^{4\la}(\p_i\psi)(\p_j\psi)(\p_{ij}\p^a\psi)\p_t\p^a\psi \\
=\f12\bigg\{\p_i\biggl((1+t)^{4\la}(\p_i\psi)(\p_j\psi)(\p_j\p^a\psi)\p_t\p^a\psi\biggr)
+\p_j\biggl((1+t)^{4\la}(\p_i\psi)(\p_j\psi)(\p_i\p^a\psi)\p_t\p^a\psi\biggr)\\
-\p_t\biggl((1+t)^{4\la}(\p_i\psi)(\p_j\psi)(\p_i\p^a\psi)\p_j\p^a\psi\biggr)
+\p_t\bigl((1+t)^{4\la}(\p_i\psi)\p_j\psi\bigr)(\p_i\p^a\psi)\p_j\p^a\psi+\textup{l.o.t.}
\bigg\}.
\end{multline}
Similarly, for the integrand $(1+t)^{\la}(\p^a\psi)\p^a Q(\psi)$ of
$\ds\int_0^t\biggl|\int_{\R^3}(1+\tau)^{\la}(\p^a\psi)\p^a
Q(\psi)dx\biggr|\,d\tau$, one has
\begin{multline}\label{2.23}
(1+t)^\la\p^a\psi\p^a Q(\psi) \\
\begin{aligned}
  &=\operatorname{div}\biggl((1+t)^\la(c^2(\rho)-1)\na(\p^a\psi)\p^a\psi\biggr)
-\f{1}{2}\p_i\biggl((1+t)^{3\la}(\p_i\psi)\p^a(|\na\psi|^2)\p^a\psi\biggr)\\
&\qquad -\p_t\biggl((1+t)^\la\p^a(|\na\psi|^2)\p^a\psi\biggr)
-(1+t)^\la(c^2(\rho)-1)|\na\p^a\psi|^2\\
&\qquad -(1+t)^\la (c^2(\rho))'\na\rho\cdot\na(\p^a\psi)\p^a\psi
+\la(1+t)^{\la-1}\p^a(|\na\psi|^2)\p^a\psi \\
&\qquad +(1+t)^\la\p^a(|\na\psi|^2)\p_t\p^a\psi
+\f{1}{2}(1+t)^{3\la}(\Dl\psi)\p^a(|\na\psi|^2)\p^a\psi \\
&\qquad +\f{1}{2}(1+t)^{3\la}\na\psi\cdot\na(\p^a\psi)\p^a(|\na\psi|^2)+\textup{l.o.t.}
\end{aligned}
\end{multline}
From the expression $(\p^{b_1}\psi)(\p^{b_2}\psi)\dots(\p^{b_l}\psi)$
($l\ge 2$, $1\le |b_1|+\dots +|b_l|\le N+1$) of the lower-order terms
one readily obtains that there exists at most one $b_j$ ($1\le j\le
l$) such that $\ds\left[\f{N+3}{2}\right]<|b_j|\le N+1$. Moreover,
$\ds\left[\f{N+3}{2}\right]\le N-2$ by $N\ge8$. Thus, applying
\eqref{2.8}-\eqref{2.10} and subsequently substituting
\eqref{2.21}-\eqref{2.23} into \eqref{2.20}, completes the
proof of Lemma~\ref{lem2.4}.
\end{proof}

Next we focus on the general time-weighted energy estimate of
$\p\G^a\psi$ with $0\le |a|\le N$ and $N\ge 8$.

\begin{lemma}[Time-weighted energy estimate of $\p \G^a\psi$ for
    $|a|\le N$]\label{lem2.5}
Let $\mu>0$ and $\la\in(0,1]$. Under \textup{assumption \eqref{2.8}},
we have that, for $t>0$,
\begin{multline}\label{2.24}
\sum_{0\leq|a|\leq N}\int_{\R^3}\biggl((1+t)^{2\la}|\p\G^a\psi|^2+|\G^a\psi|^2\biggr)dx
+C\sum_{0\leq|\al|\leq N}\int_0^t\int_{\R^3}(1+\tau)^\la|\p\G^a\psi|^2dxd\tau\\
 \leq  C\ve^2+C(1+K\ve)\int_0^tA(\tau)\sum_{0\leq|a|\leq N}\int_{\R^3}\biggl((1+\tau)^{2\la}|\p\G^a\psi|^2+\psi^2\biggr)dxd\tau,
\end{multline}
where the function $A$ has been defined in\/
\textup{Lemma~\ref{lem2.4}}.
\end{lemma}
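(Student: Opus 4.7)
The plan is to mimic the energy argument of Lemma~\ref{lem2.4} after commuting $\G^a$ through Eq.~\eqref{2.6}. Since $\O=x\wedge\na$ commutes with $\square$ and with every function of $t$ alone, and since $S=t\p_t+\sum_kx_k\p_k$ satisfies $[S,\square]=-2\square$, $[S,\p_t]=-\p_t$, and $[S,(1+t)^{-k}]=-k(1+t)^{-k}+k(1+t)^{-k-1}$, one obtains for each $|a|\le N$
\begin{equation*}
\square\G^a\psi+\f{\mu}{(1+t)^\la}\p_t\G^a\psi+\f{2\la}{1+t}\p_t\G^a\psi-\f{\la(1-\la)}{(1+t)^2}\G^a\psi=\G^a Q(\psi)+R^a(\psi),
\end{equation*}
where $R^a(\psi)$ is a finite linear combination of $(1+t)^{-\la}\p_t\G^b\psi$, $(1+t)^{-1}\p_t\G^b\psi$ and $(1+t)^{-2}\G^b\psi$ with $|b|\le|a|$ and bounded coefficients. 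When paired with the same multiplier $m(1+t)^{2\la}\p_t\G^a\psi+(1+t)^{2\la-1}\G^a\psi+\kappa(1+t)^\la\G^a\psi$ as in \eqref{2.19}, each $R^a$-contribution produces an integrand of the form $A(\tau)\bigl((1+\tau)^{2\la}|\p\G^b\psi|^2+|\G^b\psi|^2\bigr)$ with $A\in L^1((0,\infty))$, because the extra negative powers of $(1+\tau)$ combine with the weight $(1+\tau)^{2\la}$ into the integrable factor $(1+\tau)^{\la-1}$ (recalling $\la\le 1$).

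I would then reproduce steps \eqref{2.16}--\eqref{2.20} verbatim, with $\p^a$ replaced by $\G^a$. The coercive quadratic form and the positive dissipation on the left-hand side arise exactly as before; what remains is to control
\begin{equation*}
\sum_{|a|\le N}\int_0^t\biggl|\int_{\R^3}\bigl((1+\tau)^{2\la}\p_t\G^a\psi+(1+\tau)^\la\G^a\psi\bigr)\G^a Q(\psi)\,dx\biggr|\,d\tau.
\end{equation*}
For this, I perform the divergence and time-derivative factorizations \eqref{2.21}--\eqref{2.23} with $\p^a$ replaced by $\G^a$, which is legitimate since the spatial integration by parts and the telescoping in $t$ only see the first-order operators $\p_i,\p_t$ and are unaffected by the presence of $S$ and $\O$ on the other factor. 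The top-order principal part is then reduced, modulo a spatial divergence and a total time derivative, to $\p_t\bigl[(1+t)^{4\la}(\p_i\psi)\p_j\psi\bigr](\p_i\G^a\psi)\p_j\G^a\psi$, which by \eqref{2.9} and \eqref{2.11} is pointwise bounded by $CK\ve(1+t)^{-\la-1}\cdot(1+t)^{2\la}|\p\G^a\psi|^2$; the prefactor $(1+\tau)^{-\la-1}$ belongs to $L^1((0,\infty))$ for $0\le\la\le 1$.

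For the lower-order contributions of the schematic form $(\G^{b_1}\psi)\cdots(\G^{b_\ell}\psi)$ with $\ell\ge 2$ and $|b_1|+\cdots+|b_\ell|\le N+2$, the assumption $N\ge 8$ ensures that at most one index can exceed $\bigl[\tfrac{N+3}{2}\bigr]\le N-2$. That one high-order factor is placed in $L^2$ and bounded by $\sqrt{E_N(\psi(t))}$, while all remaining factors are estimated in $L^\infty$ through \eqref{2.9}--\eqref{2.11}; the Hardy inequality \eqref{2.14} converts any residual $\|(1+r)^{-1}\G^c\psi\|$ into $\|\na\G^c\psi\|$, which is absorbed by the dissipation on the left. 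The principal obstacle I anticipate is the careful bookkeeping of the time weights $(1+\tau)^{k\la}$ across the Leibniz expansions and commutators, which must consistently pair with the pointwise $CK\ve(1+\tau)^{-\la}$ or $CK\ve(1+\tau)^{-2\la}$ factors so that the final coefficient is genuinely of the form $(1+K\ve)A(\tau)$ with $A\in L^1$ uniformly in $K$; once this is verified, the estimate \eqref{2.24} follows by a Gronwall-type absorption argument.
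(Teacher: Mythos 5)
Your overall architecture --- commuting $\G^a$ through \eqref{2.6}, reusing the multiplier $m(1+t)^{2\la}\p_t\G^a\psi+(1+t)^{2\la-1}\G^a\psi+\kappa(1+t)^\la\G^a\psi$, and redoing the factorizations \eqref{2.21}--\eqref{2.23} with $\G^a$ in place of $\p^a$ --- is indeed the paper's. But there is a genuine gap in your treatment of the nonlinear terms. For the quadratic contributions $(\t\G^{b_2}\psi)\Dl\t\G^{b_3}\psi$ with $|b_3|$ close to $N$ (the terms $\RN{1}_2$, and likewise $\RN{2}_2$, $\RN{3}_2$ in the paper's Parts A--C), your recipe ``one high-order factor in $L^2$, the rest in $L^\infty$ via \eqref{2.9}--\eqref{2.11}'' does not close: here the low-order factor is the \emph{undifferentiated} $\t\G^{b_2}\psi$, for which \eqref{2.10} gives only $CK\ve(1+r)^{-1}$ with no time decay, so the H\"older bound is $(1+t)^{2\la}\cdot CK\ve\cdot\|\p_t\G^a\psi\|\cdot\|\Dl\G^{b_3}\psi\|\le CK\ve\,E_N(\tau)$ --- a coefficient that is neither in $L^1(d\tau)$ nor small relative to the dissipation weight (you would need $(1+\tau)^{2\la}\lesssim(1+\tau)^{\la}$). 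Gronwall then yields only $E_N\le C\ve^2e^{CK\ve t}$, which does not close the bootstrap. The paper's way out is the Klainerman--Sideris machinery you never invoke: the pointwise gain $\|\langle r-t\rangle^{-1}\t\G^{b_2}\psi\|_{L^{\infty}}\le CK\ve(1+t)^{-1}$ (since $(1+r)\langle r-t\rangle\gtrsim 1+t$), combined with the weighted $L^2$ estimate \eqref{2.12}, which converts $\|\langle r-t\rangle\Dl\t\G^{b_3}\psi\|$ into $C\sum\|\p\G^c\psi\|+C(1+t)^{1-\la}\sum\|\na\G^c\psi\|+\dots$, so that the net prefactor becomes $CK\ve(1+\tau)^{\la}$ and the term is absorbed by the dissipation. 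This step (see \eqref{2.28} and its analogues) is the heart of the lemma and is absent from your argument.

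A secondary point: your claim that every commutator remainder in $R^a(\psi)$ yields an integrand $A(\tau)(\cdots)$ with $A\in L^1$ is false for the pieces of the form $(1+t)^{-\la}\p_t\G^{b}\psi$. Paired with $m(1+t)^{2\la}\p_t\G^a\psi$ these produce $\sim(1+\tau)^{\la}|\p\G^{b}\psi|\,|\p\G^a\psi|$, i.e.\ exactly the strength of the dissipation but with an $O(1)$ rather than $O(K\ve)$ coefficient, so they can be neither absorbed nor treated as time-integrable. The paper handles them by inducting on the number of $\O,S$ factors in $\G^a$: the resulting term $C\sum_{|b_1|<l}\int_0^t\int(1+\tau)^{\la}|\p\t\G^{b_1}\p^{c_1}\psi|^2\,dx\,d\tau$ in \eqref{2.26} is controlled by the already-established lower-order estimate. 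You would need to set up this induction (with Lemma~\ref{lem2.4} as the base case) rather than appeal to integrability of the coefficient.
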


\begin{proof}
Writing $\G^a=\t\G^b\p^c$ with $\t\G\in \{\O, S\}$, we will use
induction on $|b|$ to prove \eqref{2.24}. In view of Lemma~\ref{lem2.4},
it is enough to assume that $|c|=0$.

Suppose that \eqref{2.24} holds for $|b|\le l-1$, where $1\le l\le N$.
We then intend to establish \eqref{2.24} for $|b|=l$.

Acting with $\t\G^a$ (where $a=b$ and $|b|=l$) on both sides of \eqref{2.6}
yields
\begin{multline}\label{2.25}
  \square\t\G^a\psi+\ds\f{\mu}{(1+t)^\la}\p_t\t\G^a\psi
  +\f{2\la}{1+t}\p_t\t\G^a\psi =
  \sum_{|b_1|<|b|}\t\G^{b_1}\p^c\square\psi \\ +\t\G^aQ(\psi)
  -\left[\t\G^a,\f{\mu}{(1+t)^\la}\p_t\right]\psi-
  \left[\t\G^a,\f{2\la}{1+t}\p_t\right]\psi
  +\t\G^a\left((\la-1)(1+t)^{-2}\psi\right).
\end{multline}
Starting from \eqref{2.25}, as in the proof of Lemma~\ref{lem2.4}, we
can choose the multiplier $m(1+t)^{2\la}\p_t\t\G^a\psi
+(1+t)^{2\la-1}\t\G^a\psi+\kappa(1+t)^\la\t\G^a\psi$ to derive
\eqref{2.24}. Indeed, it follows from a direct computation that
\begin{multline}\label{2.26}
\sum_{\substack{|b|=l, \\ |c|\leq N-l}}\int_{\R^3}\left((1+t)^{2\la}|\p\t\G^b\p^c\psi|^2+|\t\G^b\p^c\psi|^2\right)dx
+C\sum_{\substack{|b|=l, \\ |c|\leq N-l}}\int_0^t\int_{\R^3}(1+\tau)^\la|\p\t\G^b\p^c\psi|^2dxd\tau \\
\begin{aligned}
& \leq  C\ve^2+C\sum_{\substack{|b_1|<l, \\ |c_1|\leq N-|b_1|}}\int_0^t\int_{\R^3}(1+\tau)^\la|\p\t\G^{b_1}\p^{c_1}\psi|^2dxd\tau \\
  &\qquad + C(1+K\ve)\int_0^tA(\tau)\sum_{\substack{|b_1|\leq l, \\ |c_1|\leq N-|b_1|}}\int_{\R^3}\left((1+\tau)^{2\la}|\p\t\G^{b_1}\p^{c_1}\psi|^2+|\t\G^{b_1}\p^{c_1}\psi|^2\right)dxd\tau \\
&\qquad +C\sum_{\substack{|b_1|\leq l, \\ |c_1|\leq N-|b_1|}}\int_0^t\left|\int_{\R^3}(1+\tau)^{2\la}(\p_t\t\G^a\psi)\t\G^{b_1}\p^{c_1}Q(\psi)dx\right| d\tau\\
&\qquad +C\sum_{\substack{|b_1|\leq l, \\ |c_1|\leq N-|b_1|}}\int_0^t\left|\int_{\R^3}(1+\tau)^\la(\t\G^a\psi)\t\G^{b_1}\p^{c_1}Q(\psi)dx\right|d\tau.
\end{aligned}
\end{multline}
Next we deal with the last two terms in the right-hand side of
\eqref{2.26}. Note that
\[
c^2(\rho)-1=-G(\psi)\int_0^1(c^2)'\left(-sG(\psi)\right)ds,
\]
where $G(\psi)=(1+t)^\la\p_t\psi+(1+t)^{\la-1}\psi
+(1+t)^{2\la}|\na\psi|^2/2+\mu\psi$. From this, it is readily seen
that the typical terms in $Q(\psi)$ are of the form $\psi\,\Dl\psi$,
$(1+t)^\la\p_t\na\psi\cdot\na\psi$, and
$(1+t)^{2\la}(\p_i\psi)(\p_j\psi)\p_{ij}\psi$. We analyze them
separately. Without loss of generality, we assume $|c_1|=0$ in the
last two terms of \eqref{2.26}; the treatment of the other cases is
easier.

\paragraph*{Part A} \textit{Estimates of $\ds\sum_{|b_1|\leq N }\int_0^t\left|\int_{\R^3}(1+\tau)^{2\la}(\p_t\t\G^a\psi)\t\G^{b_1}(\psi\Dl\psi)dx\right|d\tau$
and $\ds\sum_{|b_1|\leq N }\int_0^t\left|\int_{\R^3}(1+\tau)^{\la}\right.$
$\left.(\t\G^a\psi) \t\G^{b_1}(\psi\Dl\psi)dx\right|d\tau$.}

\medskip

Note that
\[
\t\G^{b_1}(\psi\Dl\psi)=\RN{1}_1+\RN{1}_2+\RN{1}_3,
\]
where
\begin{align*}
\RN{1}_1&=\psi\,\Dl\t\G^{b_1}\psi,\\
\RN{1}_2&=\ds\sum_{\substack{|b_1|=|b_2|+|b_3|, \\ 1\leq|b_2|\leq N-3}}(\t\G^{b_2}\psi)\Dl\t\G^{b_3}\psi,\\
\RN{1}_3&=\ds\sum_{\substack{|b_1|=|b_2|+|b_3|, \\ N-2\leq|b_2|\leq l}}(\t\G^{b_2}\psi)\Dl\t\G^{b_3}\psi.
\end{align*}
In view of  $b_1=a$ and
\begin{multline*}
(1+t)^{2\la}(\p_t\t\G^a\psi)\psi\Dl\t\G^a\psi \\
=\operatorname{div}\biggl((1+t)^{2\la}(\p_t\t\G^a\psi)\psi\na\t\G^a\psi\biggr)
+\f{1}{2}\p_t\biggl((1+t)^{2\la}|\na\t\G^a\psi|^2\psi\biggr)\\
 -(1+t)^{2\la}(\p_t\t\G^a\psi)\na\psi\cdot\na\t\G^a\psi
-\la(1+t)^{\la-1}|\na\t\G^a\psi|^2\psi-\f{1}{2}(1+t)^{2\la}|\na\t\G^a\psi|^2\p_t\psi,
\end{multline*}
we have by an integration by parts and \eqref{2.9}-\eqref{2.10}
\begin{multline}\label{2.27}
\int_0^t\left|\int_{\R^3}(1+\tau)^{2\la}(\p_t\t\G^a\psi) \RN{1}_1\,dx\right|d\tau
\le C\ve^2  +CK\ve\sum_{0\leq|a|\leq N}\int_{\R^3}(1+t)^{2\la}|\p\t\G^a\psi|^2dx \\
+CK\ve\sum_{0\leq|a|\leq N}\int_0^t\int_{\R^3}(1+\tau)^\la|\p\t\G^a\psi|^2dxd\tau.
\end{multline}
Moreover, it follows from \eqref{2.10} and \eqref{2.12} that
\begin{multline}\label{2.28}
\int_{\R^3} \left|(1+t)^{2\la}(\p_t\t\G^a\psi)(\t\G^{b_2}\psi)\Dl\t\G^{b_3}\psi\right|dx \\
\begin{aligned}
&\leq (1+t)^{2\la}\|\langle r-t\rangle^{-1}\t\G^{b_2}\psi\|_{L^{\infty}}\cdot\|\p_t\t\G^a\psi\|\cdot\|\langle r-t\rangle\Dl\t\G^{b_3}\psi\| \\
&\leq CK\ve(1+t)^\la\|\p_t\t\G^a\psi\|\sum_{|b_4|\leq |b_3|+1}\left(\|\na\t\G^{b_4}\psi\|+(1-\la)(1+t)^{-1}\|\t\G^{b_4}\psi\|\right) \\
&\leq CK\ve(1+t)^\la\|\p_t\t\G^a\psi\|\sum_{|b_4|\leq |b_3|+1}\|\na\t\G^{b_4}\psi\|+CK\ve(1+t)^\la\|\p_t\t\G^a\psi\|^2 \\
&\qquad +CK\ve(1-\la)(1+t)^{\la-2}\sum_{|b_4|\leq |b_3|+1}\|\t\G^{b_4}\psi\|^2.
\end{aligned}
\end{multline}
On the other hand, we have that by \eqref{2.9} and Hardy's inequality
\begin{multline}\label{2.29}
\int_{\R^3} \left|(1+t)^{2\la}(\p_t\t\G^a\psi)(\G^{b_2}\psi)\Dl\t\G^{b_3}\psi\right|\,dx \\
\begin{aligned}
&\leq (1+t)^{2\la}\|(1+r)\Dl\t\G^{b_3}\psi\|_{L^{\infty}}\cdot\|\p_t\t\G^a\psi\|\|(1+r)^{-1}\t\G^{b_2}\psi\| \\
&\leq CK\ve(1+t)^\la\|\p_t\t\G^a\psi\|\sum_{|b_4|\leq |b_2|}\|\na\t\G^{b_4}\psi\|.
\end{aligned}
\end{multline}
Combining \eqref{2.27}-\eqref{2.29} yields
\begin{multline}\label{2.30}
\sum_{|b_1|\leq N }\int_0^t\left|\int_{\R^3}(1+\tau)^{2\la}(\p_t\G^a\psi)\G^{b_1}(\psi\Dl\psi)dx\right|d\tau\\
\le  C\ve^2+CK\ve\sum_{0\leq|a|\leq N}\int_{\R^3}(1+t)^{2\la}|\p\t\G^a\psi|^2dx
+CK\ve\sum_{0\leq|a|\leq N}\int_0^t\int_{\R^3}(1+\tau)^\la|\p\t\G^a\psi|^2dxd\tau\\
 +CK\ve\ds\sum_{|b_4|\le N}\int_0^tA(\tau)\int_{\R^3}|\t\G^{b_4}\psi|^2dxd\tau.
\end{multline}
Note that
\begin{align*}
(1+t)^\la(\t\G^a\psi)\t\G^{b_1}(\psi\Dl\psi) &= \sum_{|b_2|+|b_3|=|b_1|}(1+t)^\la(\t\G^a\psi)(\t\G^{b_2}\psi)\Dl\t\G^{b_3}\psi \\
& =\ds \operatorname{div}\left(\sum_{|b_2|+|b_3|=|b_1|}(1+t)^\la(\t\G^a\psi)(\t\G^{b_2}\psi)\na\t\G^{b_3}\psi\right)+\sum_{i=4}^5\RN{1}_i,
\end{align*}
where
\begin{align*}
\RN{1}_4&=-\sum_{\substack{|b_2|\leq N-3,\\ |b_2|+|b_3|=|b_1|}}(1+t)^\la(\t\G^{b_2}\psi)(\na\t\G^a\psi)\cdot(\na\t\G^{b_3}\psi),\\
\RN{1}_5&= -\sum_{\substack{N-2\leq|b_2|\leq l-1,\\ |b_2|+|b_3|=|b_1|}}(1+t)^\la(\t\G^{b_2}\psi)(\na\t\G^a\psi)\cdot(\na\t\G^{b_3}\psi) \\
&\qquad -\sum_{|b_2|+|b_3|=|b_1|}(1+t)^\la(\t\G^a\psi)(\na\t\G^{b_2}\psi)\cdot(\na\t\G^{b_3}\psi).
\end{align*}
Therefore, by \eqref{2.10} and Hardy's inequality, we have
\begin{equation*}\label{2.31}
  \int_{\R^3}|\RN{1}_4|\,dx \leq CK\ve(1+t)^\la\,\|\na\t\G^a\psi\|\ds\sum_{|b_1|+3-N\le |b_3|\le N}\|\na\t\G^{b_3}\psi\|
\end{equation*}
and
\begin{equation*}\label{2.32}
\int_{\R^3}|\RN{1}_5|dx \leq CK\ve\|(1+r)^{-1}\t\G^{b_2}\psi\na\t\G^a\psi\|_{L^1}\leq CK\ve\|\na\t\G^{b_2}\psi\|\|\na\t\G^a\psi\|.
\end{equation*}
This yields
\begin{equation}\label{2.33}
\sum_{|b_1|\leq N }\int_0^t\left|\int_{\R^3}(1+\tau)^{\la}(\t\G^a\psi)
\t\G^{b_1}(\psi\Dl\psi)dx\right|d\tau\le
CK\ve\sum_{0\leq|a|\leq N}\int_0^t\int_{\R^3}(1+\tau)^\la|\p\t\G^a\psi|^2dxd\tau.
\end{equation}

\paragraph{Part B} \textit{Estimates of $\ds\sum_{|b_1|\leq N }\int_0^t\left|
  \int_{\R^3}(1+\tau)^{2\la}(\p_t\t\G^a\psi)\t\G^{b_1}\left((1+\tau)^{\la}
  \p_t\na\psi\cdot\na\psi\right)dx\right|d\tau$ and \linebreak
  $\ds\sum_{|b_1|\leq N
  }\int_0^t\left|\int_{\R^3}(1+\tau)^{\la}(\t\G^a\psi)
  \t\G^{b_1}\left((1+\tau)^{\la}\p_t\na\psi\cdot\na\psi\right)dx\right|d\tau$.}

One has
\begin{align*}
  \t\G^{b_1}\bigl((1+t)^\la\p_t\na\psi\cdot\na\psi\bigr)
  & =(1+t)^\la\p_t\na\t\G^{b_1}\psi\cdot\na\psi
  +\sum_{N-3\leq|b_2|\leq l-1}(1+t)^\la(\p_t\na\t\G^{b_2}\psi)\na\t\G^{b_3}\psi \\
&\qquad +\sum_{|b_2|\leq N-4}(1+t)^\la(\p_t\na\t\G^{b_2}\psi)\na\t\G^{b_3}\psi \\
&=\RN{2}_1+\RN{2}_2+\RN{2}_3.
\end{align*}
By \eqref{2.9}, we have
\begin{equation}\label{2.34}
  \sum_{|b_1|\leq N }\int_0^t\left|\int_{\R^3}(1+\tau)^{2\la}(\p_t\t\G^a\psi)\, \RN{2}_1\,dx
  \right|d\tau
\le CK\ve\sum_{0\leq|a|\leq N}\int_0^t\int_{\R^3}(1+\tau)^\la|\p\t\G^a\psi|^2dxd\tau.
\end{equation}
In addition, it follows from \eqref{2.9}, \eqref{2.11} and a direct computation that
\begin{multline}
(1+t)^{2\la}\|(\p_t\G^a\psi) \RN{2}_2\|_{L^1} \\
\begin{aligned}\label{2.35}
  &\leq (1+t)^{3\la}\ds\sum_{|b_2|\leq N-4}\|\langle r-t\rangle^{-1}\na\G^{b_3}\psi\|_{L^{\infty}}\cdot\|\p_t\G^a\psi\|\cdot\|\langle r-t\rangle\,\p_t\na\G^{b_2}\psi\| \\
&\leq CK\ve(1+t)^\la\|\p_t\G^a\psi\|\sum_{|c|\leq |b_2|+1}\left(\|\na\G^c\psi\|+(1-\la)(1+t)^{-1}\|\G^c\psi\|\right)\\
&\leq CK\ve(1+t)^\la\|\p_t\t\G^a\psi\|\sum_{|b_4|\leq |b_3|+1}\|\na\t\G^{b_4}\psi\|+CK\ve(1+t)^\la\|\p_t\t\G^a\psi\|^2 \\
&\qquad +CK\ve(1-\la)(1+t)^{\la-2}\sum_{|b_4|\leq |b_3|+1}\|\t\G^{b_4}\psi\|^2.
\end{aligned}
\end{multline}
Treating $\RN{2}_3$, we obtain by \eqref{2.10}
\begin{equation}\label{2.36}
  \int_0^t\left|\int_{\R^3}(1+\tau)^{2\la}(\p_t\t\G^a\psi) \RN{2}_3dx\right|d\tau\le
CK\ve\int_0^t\int_{\R^3}(1+\tau)^\la|\p\t\G^a\psi|^2\,dxd\tau.
\end{equation}
Collecting \eqref{2.34}-\eqref{2.36} yields
\begin{multline}\label{2.37}
\sum_{|b_1|\leq N}\int_0^t\left|\int_{\R^3}(1+\tau)^{2\la}
(\p_t\G^a\psi)\G^{b_1}\left((1+t)^{\la}
\p_t\na\psi\cdot\na\psi\right)dx\right|d\tau\\ \le CK\ve\sum_{0\leq|a|\leq
  N}\int_0^t\int_{\R^3}(1+\tau)^\la|\p\t\G^a\psi|^2\,dxd\tau +CK\ve
\sum_{|b_4|\le N}\int_0^tA(\tau)\int_{\R^3}|\t\G^{b_4}\psi|^2\,dxd\tau.
\end{multline}
In addition, one notes that
\begin{multline*}
2(1+t)^{2\la}(\t\G^a\psi)\t\G^a\left(\p_t\na\psi\cdot\na\psi\right) =
\sum_{|c|\leq|a|}\p_t\left((1+t)^{2\la}\t\G^a\psi\G^c(|\na\psi|^2)\right) \\
-2\la(1+t)^{2\la-1}(\t\G^a\psi)\t\G^c\left(|\na\psi|^2\right)
-(1+t)^{2\la}(\p_t\t\G^a\psi)\t\G^c\left(|\na\psi|^2\right).
\end{multline*}
From this and \eqref{2.10}, we have
\begin{multline}\label{2.38}
\sum_{|b_1|\leq N }\int_0^t\left|\int_{\R^3}(1+\tau)^{\la}(\t\G^a\psi)
\t\G^{b_1}\left((1+\tau)^{\la}\p_t\na\psi\cdot\na\psi\right)dx\right|\,d\tau \le
C\ve^2 \\ +CK\ve\sum_{0\leq|a|\leq
  N}\int_{\R^3}(1+t)^{2\la}|\p\t\G^a\psi|^2\,dx
+CK\ve\sum_{0\leq|a|\leq
  N}\int_0^t\int_{\R^3}(1+\tau)^\la|\p\t\G^a\psi|^2\,dxd\tau.
\end{multline}

\paragraph{Part C} \textit{Estimates of $\ds\sum_{|b_1|\leq N }\int_0^t
  \left|\int_{\R^3}(1+\tau)^{2\la}(\p_t\t\G^a\psi)\t\G^{b_1}\left((1+\tau)^{2\la}(\p_i\psi)
  (\p_j\psi)\p_{ij}\psi\right)dx\right|d\tau$
  and \linebreak $\ds\sum_{|b_1|\leq N
  }\int_0^t\left|\int_{\R^3}(1+\tau)^{\la}(\t\G^a\psi)
  \t\G^{b_1}\left((1+\tau)^{2\la}(\p_i\psi)(\p_j\psi)\p_{ij}\psi\right)dx\right|d\tau$.}

\medskip
A direct computation yields
\begin{align*}
  \t\G^{b_1}((\p_i\psi)(\p_j\psi)\p_{ij}\psi) &=
  \f{1}{2}\,\t\G^{b_1}\left((\p_i\psi)\p_i\left(|\na\psi|^2\right)\right)\\
  & =\f{1}{2}\,(\p_i\psi)\p_i\t\G^{b_1}\left(|\na\psi|^2\right)
  +\sum_{N-3\leq|b_2|\leq |b_1|-1}(\na^2\t\G^{b_2}\psi)(\na\t\G^{b_3}\psi)\na\t\G^{b_4}\psi \\
  & \qquad
  + \sum_{|b_2|\leq N-4}(\na^2\t\G^{b_2}\psi)(\na\t\G^{b_3}\psi)\na\t\G^{b_4}\psi\\
&=\RN{3}_1+\RN{3}_2+\RN{3}_3.
\end{align*}
As in the treatment of $\RN{2}_1$ in Part B, we have
\begin{equation}\label{2.39}
  \sum_{|b_1|\leq N }\int_0^t\left|\int_{\R^3}(1+\tau)^{2\la}(\p_t\t\G^a\psi)\, \RN{3}_1dx
  \right|d\tau
\le CK\ve\sum_{0\leq|a|\leq N}\int_0^t\int_{\R^3}(1+\tau)^\la|\p\t\G^a\psi|^2dxd\tau.
\end{equation}
By \eqref{2.9} and \eqref{2.12}, for the term $\RN{3}_2$, we have
\begin{multline}\label{2.40}
  (1+t)^{4\la}\|(\p_t\t\G^a\psi)(\na^2\t\G^{b_2}\psi)(\na\t\G^{b_3}\psi)
  \na\t\G^{b_4}\psi\|_{L^1} \\
\begin{aligned}
&\leq (1+t)^{4\la}\|\langle r-t\rangle^{-1}(\na\t\G^{b_3}\psi)\na\t\G^{b_4}\psi\|_{L^{\infty}}\cdot\|\p_t\t\G^a\psi\|\cdot\|\langle r-t\rangle\na^2\t\G^{b_2}\psi\|\\
&\leq CK\ve(1+t)^\la\|\p_t\t\G^a\psi\|\sum_{|b_4|\leq |b_3|+1}\|\na\t\G^{b_4}\psi\|+CK\ve(1+t)^\la\|\p_t\t\G^a\psi\|^2 \\
&\qquad +\, CK\ve(1-\la)(1+t)^{\la-2}\sum_{|b_4|\leq |b_3|+1}\|\t\G^{b_4}\psi\|^2.
\end{aligned}
\end{multline}
By \eqref{2.10} and \eqref{2.11}, for the term $\RN{3}_3$, one has
\begin{equation}\label{2.41}
  (1+t)^{4\la}\|(\p_t\t\G^a\psi)(\na^2\t\G^{b_2}\psi)(\na\t\G^{b_3}\psi)
  \na\t\G^{b_4}\psi\|_{L^1} \leq CK\ve(1+t)^\la\|\p_t\t\G^a\psi\|\sum_{|c|\leq|b_1|}
  \|\na\t\G^c\psi\|.
\end{equation}
Collecting \eqref{2.39}-\eqref{2.41} yields
\begin{multline}\label{2.42}
  \sum_{|b_1|\leq N }\int_0^t\left|\int_{\R^3}(1+\tau)^{2\la}(\p_t\t\G^a\psi)\t\G^{b_1}
  \left((1+\tau)^{2\la}(\p_i\psi)(\p_j\psi)\p_{ij}\psi\right)dx\right|d\tau\\
\le CK\ve\sum_{0\leq|a|\leq N}\int_0^t\int_{\R^3}(1+\tau)^\la|\p\t\G^a\psi|^2dxd\tau
+CK\ve\sum_{|b_4|\le N}\int_0^tA(\tau)\int_{\R^3}|\t\G^{b_4}\psi|^2dxd\tau.
\end{multline}
In addition,
\begin{multline*}
 2(1+t)^{3\la}(\G^a\psi)\G^{b_1}\left((\p_i\psi)(\p_j\psi)\p_{ij}\psi\right) \\
 =\operatorname{div}\left((1+t)^{3\la}(\G^a\psi)(\na\psi)
 \G^{b_1}\left(|\na\psi|^2\right)\right)
 -(1+t)^{3\la}(\na\G^a\psi)(\na\psi)\G^{b_1}\left(|\na\psi|^2\right) \\
 -(1+t)^{3\la}(\G^a\psi)(\Dl\psi)\G^{b_1}\left(|\na\psi|^2\right)
 +\sum_{|b_2|\leq |b_1|-1}(1+t)^{3\la}(\G^a\psi)(\na^2\G^{b_2}\psi)(\na\G^{b_3}\psi)\na
 \G^{b_4}\left(|\psi|^2\right).
\end{multline*}
Together with \eqref{2.9}-\eqref{2.10} this yields
\begin{multline}\label{2.43}
\sum_{|b_1|\leq N }\left|\int_0^t\int_{\R^3}(1+\tau)^{\la}(\G^a\psi)
\G^{b_1}\left((1+\tau)^{2\la}(\p_i\psi)(\p_j\psi)\p_{ij}\psi\right)dxd\tau\right|\\
\le CK\ve\ds\sum_{|a|\leq N }\int_0^t\int_{\R^3}(1+\tau)^\la|\p\G^a\psi|^2dxd\tau.
\end{multline}
Therefore, substituting \eqref{2.30}, \eqref{2.33},
\eqref{2.37}-\eqref{2.38}, and \eqref{2.42}-\eqref{2.43} into
\eqref{2.26} and utilizing the smallness of $\ve>0$ gives
\eqref{2.24}.
\end{proof}

Based on Lemmas \ref{lem2.4}--\ref{lem2.5}, we now prove
Theorem~\ref{thm1.1}.

\begin{proof}[Proof of Theorem~\ref{thm1.1}]
By Lemma~\ref{lem2.4} and Lemma~\ref{lem2.5}, one has that, for fixed
$N\ge 9$,
\[
E_N(t) \leq C\ve^2+C(1+K\ve)\int_0^t A(t')E_N(t')\,dt'.
\]
Choosing the constants $K>0$ large and $\ve>0$ small, by Gronwall's
inequality one gets that, for any $t\ge 0$,
\[
E_N(t)\le e^{C(1+K\ve)\|A(t)\|_{L^1}}E_N(0)\leq \ds\f{1}{2}K^2\ve^2
\]
Thus, Theorem~\ref{thm1.1} is proved by the assumption that $E_N(t)\leq
K^2\ve^2$ and a continuous induction argument.
\end{proof}

%% -------------------------------------------------------------------

\section{Blowup for small data in case \boldmath $\la>1$}
\label{chap3}
%%{Proof of Theorem~\ref{thm1.2}}

In this section, we shall prove the blowup result of
Theorem~\ref{thm1.2} which is valid in case $\la>1$.

\begin{proof}[Proof of Theorem~\ref{thm1.2}]
  We divide the proof into two cases.

\subsubsection*{Case 1: \boldmath $\g=2$.}

Let $(\rho,u)$ be a smooth solution of \eqref{1.1}. For $l>0$, we define
\begin{equation}\label{3.1}
P(t,l)=\int_{|x|>l}\eta(x,l)\left(\rho(t,x)-\bar\rho\right)dx,
\end{equation}
where
\begin{equation*}\label{3.2}
\eta(x,l)=|x|^{-1}(|x|-l)^2.
\end{equation*}
Employing the first equation in \eqref{1.1} and an integration by
parts, we see that
\begin{align*}
  \p_tP(t,l)&=\int_{|x|>l}\eta(x,l)\p_t\left(\rho(t,x)-\bar\rho\right)dx=
  -\,\int_{|x|>l}\eta(x,l)\operatorname{div}(\rho u)(t,x)\,dx \\
&=\int_{|x|>l}(\na_x\eta)(x,l)\cdot(\rho u)(t,x)\,dx,
\end{align*}
where we have used the fact that $\eta(x,l)=0$ on $|x|=l$ and that
$u(t,x)=0$ for $|x|\geq t+M$.

We first show that, under the assumptions \eqref{1.11}-\eqref{1.12},
$P(t,l)$ defined by \eqref{3.1} is nonnegative for $l\geq M_0$. By
differentiating $\p_tP(t,l)$ again and using the second equation in
\eqref{1.1}, we find that
\begin{multline}\label{3.3}
\p_t^2P(t,l) =\int_{|x|>l}(\na_x\eta)(x,l)\cdot\p_t(\rho u)(t,x)\,dx
 =-\sum_{i,j}\int_{|x|>l}(\p_{x_i}\eta)\,\p_{x_j}(\rho
  u_iu_j)\,dx \\ -\int_{|x|>l}(\na_x\eta)(x,l)\cdot\na
p\,dx -  \f{\mu}{(1+t)^\la}\int_{|x|>l}(\na_x\eta)(x,l)\cdot(\rho
u)(t,x)\,dx,
\end{multline}
where $\na_x\eta(x,l)=|x|^{-3}(|x|^2-l^2)x$ that vanishes on
$|x|=l$. Integration by parts implies that
\begin{align*}
  \p_t^2P(t,l)+ \f{\mu}{(1+t)^\la}\,\p_tP(t,l)&=
  \sum_{i,j}\int_{|x|>l}(\p_{x_ix_j}\eta)\rho
  u_iu_j\,dx+\int_{|x|>l}(\Dl\eta) p\,dx \\ &\equiv
  J_1(t,l)+\int_{|x|>l}\left(\Dl\eta\right) p\,dx.
\end{align*}
A direct computation of $\p_{x_ix_j}\eta$ shows that
\begin{multline}\label{3.4}
J_1(t,l)=\int_{|x|>l}\f{2l^2}{|x|^3}\,\rho\left(\f{x}{|x|}\cdot u\right)^2dx \\
-\int_{|x|>l}\f{|x|^2-l^2}{|x|^3}\,\rho\left(\f{x}{|x|}\cdot u\right)^2dx
 +\int_{|x|>l}\f{|x|^2-l^2}{|x|^3}\,\rho|u|^2dx \geq 0.
\end{multline}
Together with the fact that $\Dl\eta=2|x|^{-1}\geq0$, this yields
\[
\p_t^2P(t,l)+\f{\mu}{(1+t)^\la}\,\p_tP(t,l)\geq0.
\]
Note that assumptions \eqref{1.11} and \eqref{1.12} imply $P(0,l)>0$
and $\p_tP(0,l)\geq0$ for $M_0\le l\le M$. Integrating the above
differential inequality twice yields $P(t,l)\geq0$ for $l\ge M_0$ and
$t\geq0$.

Next we derive a lower bound of $P(t,l)$. We now rewrite \eqref{3.3}
as
\begin{align*}
\p_t^2P(t,l)&=-\sum_{i,j}\int_{|x|>l}(\p_{x_i}\eta)\,\p_{x_j}(\rho
  u_iu_j)\,dx-\int_{|x|>l}(\na_x\eta)(x,l)\cdot\na(p-\bar p)\,dx
\\ &\qquad- \f{\mu}{(1+t)^\la}\int_{|x|>l}(\na_x\eta)(x,l)\cdot(\rho
u)(t,x)\,dx,
\end{align*}
where $\bar p=p(\bar\rho)$. Let $J_2(t,l)\equiv
\int_{|x|>l}(\Dl\eta)(p-\bar p)\,dx$. Then the above equation reads
\begin{equation}\label{3.5}
\p_t^2P(t,l)+\f{\mu}{(1+t)^\la}\,\p_tP(t,l)=J_1(t,l)+J_2(t,l).
\end{equation}
Note that
\begin{equation*}
\p_l^2\eta(x,l)=2|x|^{-1}=\Dl_x\eta(x,l).
\end{equation*}
Then
\begin{equation}\label{3.6}
J_2(t,l)=\int_{|x|>l}\p_l^2\eta(x,l)(p(t,x)-\bar p)\,dx=
\p_l^2\int_{|x|>l}\eta(x,l)(p(t,x)-\bar p)\,dx,
\end{equation}
where we have used the fact that $\eta$ and $\p_l\eta$ vanish on
$|x|=l$. Combining \eqref{3.4}-\eqref{3.6}, we arrive at
\begin{equation}\label{3.7}
\p_t^2P(t,l)-\p_l^2P(t,l)+\f{\mu}{(1+t)^\la}\,\p_tP(t,l)\geq G(t,l),
\end{equation}
where
\begin{equation}\label{3.8}
G(t,l)=\p_l^2\int_{|x|>l}\eta(x,l)\left(p-\bar p-(\rho-\bar\rho)\right)dx
=\int_{|x|>l}2|x|^{-1}\left(p-\bar p-(\rho-\bar\rho)\right)dx.
\end{equation}
Thanks to $\g=2$ and the sound speed $\bar c=\sqrt{2A\bar\rho}=1$, we
have
\begin{equation}\label{3.9}
p-\bar p-(\rho-\bar\rho)=A\left(\rho^2-\bar\rho^2
-2\bar\rho\left(\rho-\bar\rho\right)\right)=A(\rho-\bar\rho)^2.
\end{equation}
Substituting \eqref{3.9} into \eqref{3.8} gives
\begin{equation*}
G(t,l)\geq 0.
\end{equation*}
To estimate $P=P(t,l)$ from inequality \eqref{3.7}, we first study the
solution of the equation
\begin{equation*}
\p_t^2\t P(t,l)-\p_l^2\t P(t,l)+\f{\mu}{(1+t)^\la}\,\p_t\t P(t,l)=G(t,l),
\end{equation*}
where $\t P(0,l)=P(0,l)$ and $\p_t\t P(0,l)=\p_t P(0,l)$.

Rewriting the above equation as
\begin{equation*}
\p_t^2\t P(t,l)-\p_l^2\t P(t,l)+ \f{\mu}{(1+t)^\la}\left(\p_t\t
P(t,l)-\p_l\t P(t,l)\right) =G(t,l)- \f{\mu}{(1+t)^\la}\,\p_l\t P(t,l),
\end{equation*}
by the method of characteristics we have, for $l\ge t\ge0$,
\begin{align*}
\t P(t,l)&=\f{1}{2}\,P(0,l+t)+\f{1}{2\beta(t)}\,P(0,l-t)
+\f{1}{2}\int_0^t\f1{\beta(\tau)}\,\f{\mu}{(1+\tau)^\la}\,P(0,l+t-2\tau)\,d\tau
\\ &\qquad +\int_0^t\f1{\beta(\tau)}\,\p_tP(0,l+t-2\tau)\,d\tau
+\f{1}{2}\int_0^t\int_{l-t+\tau}^{l+t-\tau}
\f{\beta(\tau)}{\beta\left(\f{l+t+\tau-y}{2}
  \right)}\,G(\tau,y)\,dyd\tau \\ &\qquad
+\f{1}{2}\int_0^t\f{\beta(\tau)}{\beta(t)}\,\f{\mu}{(1+\tau)^\la}\, \t
P(\tau,l-t+\tau)\,d\tau \\ &\qquad +\f{1}{2}\int_0^t\int_\tau^t
\f{\beta(\tau)}{\beta(s)}\, \, \f{\mu^2}{(1+\tau)^\la(1+s)^\la}\,\t
P(\tau,l+t-2s+\tau)\,dsd\tau \\
&\qquad -\f{1}{2}\int_0^t\ds\f{\mu}{(1+\tau)^\la}\,\t P(\tau,l+t-\tau)\,d\tau,
\end{align*}
see \eqref{beta}. Together with assumptions
\eqref{1.11}-\eqref{1.12} this yields, for $l\geq t+M_0$,
\begin{multline}\label{3.10}
P(t,l) \ge \t P(t,l)\ge \f{1}{2\beta(t)}\,q_0(l-t)
\\ +\f{1}{2}\int_0^t\int_{l-t+\tau}^{l+t-\tau}
\frac{\beta(\tau)}{\beta\left(\f{l+t+\tau-y}{2} \right)}\,
G(\tau,y)\,dyd\tau -\f{1}{2}\int_0^t
\f{\mu}{(1+\tau)^\la}\,P(\tau,l+t-\tau)d\tau.
\end{multline}

Define the function
\begin{equation}\label{3.11}
F(t)\equiv\int_0^t(t-\tau)\int_{\tau+M_0}^{\tau+M}P(\tau, l)\,\f{dl}l d\tau.
\end{equation}
Then, by \eqref{3.10},  we have that
\begin{multline}\label{3.12}
F''(t)=\int_{t+M_0}^{t+M}P(t,l)\,\f{dl}l
\geq\f{1}{2\beta(t)}\int_{t+M_0}^{t+M}q_0(l-t)\,\f{dl}l
\\ +\f{1}{2}\int_{t+M_0}^{t+M}\int_0^t\int_{l-t+\tau}^{l+t-\tau}
\frac{\beta(\tau)}{\beta\left(\f{l+t+\tau-y}{2} \right)}\, G(\tau,
y)\,dyd\tau \f{dl}l
\\ -\f{1}{2}\int_{t+M_0}^{t+M}\int_0^t\ds\f{\mu}{(1+\tau)^\la}\,P(\tau,
l+t-\tau)\,d\tau \f{dl}l \equiv J_3+J_4-J_5.
\end{multline}
From $\la>1$ and assumption \eqref{1.11}, we see that
\begin{equation}\label{3.13}
J_3\geq
\f{c_1}{t+M}\int_{t+M_0}^{t+M}q_0(l-t)\,dl=\f{c_1}{t+M}\int_{M_0}^Mq_0(l)
\,dl=\f{c_2\ve}{t+M}
\end{equation}
where $c_1,c_2>0$ are constants independent of $\ve$. Note that
$P(\tau,y)$ is supported in $\{y\colon y\leq \tau+M\}$ and nonnegative
for $y\geq M_0$. Hence, there exists a constant $C_1>0$ such that
\begin{equation}\label{3.14}
J_5\leq\f{C_1}{(1+t)^\la}\int_0^t\int_{\tau+M_0}^{\tau+M}P(\tau,y)\,\f{dy}yd\tau
=\f{C_1}{(1+t)^\la}F'(t).
\end{equation}
Substituting \eqref{3.14} into \eqref{3.12} gives
\begin{equation}\label{3.15}
F''(t)+\f{C_1}{(1+t)^\la}F'(t)\geq J_3+J_4.
\end{equation}
To bound $J_4$ from below, we write
\begin{align}
  J_4&=\f{1}{2}\int_0^{t-M_1}\int_{\tau+M_0}^{\tau+M}G(\tau,y)
  \int_{t+M_0}^{y+t-\tau}\frac{\beta(\tau)}{\beta\left(\f{l+t+\tau-y}{2}
    \right)}\, \f{dl}ldyd\tau \notag
  \\ &\quad+\f{1}{2}\int_{t-M_1}^t\int_{\tau+M_0}^{2t-\tau+M_0}G(\tau,y)
  \int_{t+M_0}^{y+t-\tau}\frac{\beta(\tau)}{\beta\left(\f{l+t+\tau-y}{2}
    \right)}\, \f{dl}ldyd\tau \notag
  \\ &\quad+\f{1}{2}\int_{t-M_1}^t\int_{2t-\tau+M_0}^{\tau+M}G(\tau,y)
  \int_{y-t+\tau}^{y+t-\tau}\frac{\beta(\tau)}{\beta\left(\f{l+t+\tau-y}{2}
    \right)}\, \f{dl}ldyd\tau \notag \\ &\equiv J_{4,1}+J_{4,2}+J_{4,3},
  \label{3.16}
\end{align}
where $M_1=\left(M-M_0\right)/2$. For $t<M_1$, $t-M_1$ in the limits
of integration is replaced by $0$. By $\la>1$, for the integrand in
$J_{4,1}$ we have that
\begin{equation}\label{3.17}
  \int_{t+M_0}^{y+t-\tau}
  \frac{\beta(\tau)}{\beta\left(\f{l+t+\tau-y}{2} \right)}\, \f{dl}l
  \geq c\,\f{y-\tau-M_0}{t+M}
  \geq c\,\f{(t-\tau)(y-\tau-M_0)^2}{(t+M)^2}.
\end{equation}
Analogously, for the integrands in $J_{4,2}$ and $J_{4,3}$ we have
that
\begin{equation}\label{3.18}
\int_{t+M_0}^{y+t-\tau}\frac{\beta(\tau)}{\beta\left(\f{l+t+\tau-y}{2}
  \right)}\, \f{dl}l \geq c\,\f{(t-\tau)(y-\tau-M_0)^2}{(t+M)^2}
\end{equation}
and
\begin{equation}\label{3.19}
  \int_{y-t+\tau}^{y+t-\tau}\frac{\beta(\tau)}{\beta\left(\f{l+t+\tau-y}{2}
    \right)}\, \f{dl}l \geq c\,\f{t-\tau}{t+M} \geq
  c\,\f{(t-\tau)(y-\tau-M_0)^2}{(t+M)^2},
\end{equation}
where $c>0$ is a constant. Substituting \eqref{3.17}-\eqref{3.19} into
\eqref{3.16} yields
\begin{align*}
J_4\geq
\f{c}{(t+M)^2}\int_0^t(t-\tau)\int_{\tau+M_0}^{\tau+M}(y-\tau-M_0)^2
\p_y^2\t G(\tau,y)\,dyd\tau,
\end{align*}
where $\t G(t,l)=\int_{|x|>l}\eta(x,l)\left(p-\bar
p-(\rho-\bar\rho)\right)dx$.  Note that $\t G(\tau,y)=\p_y\t
G(\tau,y)=0$ for $y=\tau+M$. Thus, it follows from the integration by
parts together with \eqref{3.8}-\eqref{3.9} that
\begin{align}
J_4&\geq \f{c}{(t+M)^2}\int_0^t(t-\tau)\int_{\tau+M_0}^{\tau+M}\t
G(\tau,y)\,dyd\tau \notag \\ &\geq
\f{c}{(t+M)^2}\int_0^t(t-\tau)\int_{\tau+M_0}^{\tau+M}
\int_{|x|>y}\eta(x,y)\left(\rho(\tau,x)-\bar\rho\right)^2dxdyd\tau \notag
\\ &\equiv \f{c}{(t+M)^2}\,J_6. \label{3.20}
\end{align}
By applying the Cauchy-Schwartz inequality to $F(t)$ defined by
\eqref{3.11}, we arrive at
\begin{equation}\label{3.21}
  F^2(t) \leq J_6\int_0^t(t-\tau)\int_{\tau+M_0}^{\tau+M}
  \int_{y<|x|<\tau+M}\eta(x,y)\,dx\f{dy}{y^2}d\tau\equiv J_6J_7.
\end{equation}
We estimate $J_7$ as
\begin{align}
  J_7 &= \int_0^t(t-\tau)\int_{\tau+M_0}^{\tau+M}
  \int_{y<|x|<\tau+M}\f{(|x|-y)^2}{|x|}\,dx\f{dy}{y^2}d\tau \notag
  \\ &= \int_0^t(t-\tau)\int_{\tau+M_0}^{\tau+M}\int_y^{\tau+M}4\pi
  l\left(l-y\right)^2 dl\f{dy}{y^2}d\tau \notag \\ &\leq
  C\int_0^t(t-\tau)\int_{\tau+M_0}^{\tau+M}(\tau+M)\left(\tau+M-y\right)^3
  \f{dy}{y^2}d\tau \notag \\ & \leq
  C\int_0^t(t-\tau)(\tau+M)\int_{\tau+M_0}^{\tau+M}\f{dy}{y^2}d\tau
  \notag \\ &\leq C\int_0^t\f{t-\tau}{\tau+M}\,d\tau \leq
  C\left(t+M\right)\log\f{t}{M+1}. \label{3.22}
\end{align}
Combining \eqref{3.13}, \eqref{3.15} and \eqref{3.20}-\eqref{3.22}
gives the ordinary differential inequalities
\begin{align}
F''(t)+\f{C_1}{(1+t)^\la}\,F'(t) &\geq \f{c_2\ve}{t+M},
&& \hspace{-15mm}t\geq0, \hspace{15mm} \label{3.23}
\\ F''(t)+\f{C_1}{(1+t)^\la}\,F'(t) &\geq
C\left(t+M\right)^3\log\f{t}{M+1}\,F^2(t),
&& \hspace{-15mm}t\geq0. \hspace{15mm} \label{3.24}
\end{align}
Next, we apply \eqref{3.23}-\eqref{3.24} to prove that the lifespan $T_\ve$ of smooth solution $F(t)$
is finite for all $0<\ve\leq\ve_0$.
The fact that $F(0)=F'(0)=0$, together with \eqref{3.23}, yields
\begin{align}
F'(t) &\geq C\ve\log(t/M+1), \qquad t\geq0, \label{3.25} \\
F(t)  &\geq C\ve(t+M)\log(t/M+1), \quad t\geq t_1\equiv Me^2, \label{3.26}
\end{align}
where the constant $C>0$ is independent of $\ve$.
Substituting \eqref{3.26} into \eqref{3.24} derives
\begin{equation*}\label{3.27}
F''(t)+\f{C_1}{(1+t)^\la}F'(t) \geq C\ve^2(t+M)^{-1}\log(t/M+1), \quad t\geq t_1,
\end{equation*}
which leads to the improvement
\begin{equation}\label{3.28}
F(t) \geq C\ve^2(t+M)\log^2(t/M+1), \quad t\geq t_2\equiv Me^3>t_1.
\end{equation}
Substituting this into \eqref{3.24} derives
\begin{equation}\label{3.29}
F''(t)+\f{C_1}{(1+t)^\la}F'(t) \geq C\ve^2(t+M)^{-2}\log(t/M+1)F(t), \qquad t\geq t_2.
\end{equation}
It follows from \eqref{3.25} that $F'(t)\geq0$ for $t\geq0$. Then
multiplying \eqref{3.29} by $F'(t)$ and integrating from $t_3$ (which
will be chosen later) to $t$ yield
\begin{equation*}
F'(t)^2 \geq C_2F'(t_3)^2+C_3\ve^2\int_{t_3}^t(s+M)^{-2}\log(s/M+1)[F(s)^2]'ds.
\end{equation*}
Integrating by parts yields
\begin{multline}\label{3.30}
F'(t)^2 \geq
C_2F'(t_3)^2+C_3\ve^2\left(t+M)^{-2}\log(t/M+1)F(t)^2-(t_3+M)^{-2}
\log(t_3/M+1)F(t_3)^2\right)
\\ -\int_{t_3}^t\left(\f{\log(s/M+1)}{(s+M)^2}\right)'F(s)^2\,ds, \quad  t\geq t_3,
\end{multline}
where $\left(\f{\log(s/M+1)}{(s+M)^2}\right)'\leq0$ for $t\geq t_3\geq
t_2$. On the other hand, \eqref{3.23} implies
\begin{equation*}
\left(e^{-\f{C_1}{\la-1}[(1+t)^{1-\la}-1]}F'(t)\right)' \geq 0, \quad t\geq 0,
\end{equation*}
which yields for $0\leq t\leq \tau$
\begin{equation}\label{3.31}
F'(t) \leq e^{\f{C_1}{\la-1}[(1+t)^{1-\la}-(1+\tau)^{1-\la}]}F'(\tau).
\end{equation}
Together with $F(0)=0$, this yields
\begin{equation}\label{3.32}
F(t)=\int_0^tF'(s)ds \leq C_4tF'(t), \quad t>0.
\end{equation}
Choose
\begin{equation}\label{3.33}
t_3=M\left(e^\f{C_2}{2C_3C_4\ve^2}-1\right)
\end{equation}
which satisfies $2C_3C_4\log(t_3/M+1)\ve^2=C_2$. Together with
\eqref{3.30} and \eqref{3.32}, this yields
\begin{equation}\label{3.34}
F'(t) \geq \sqrt C_3\ve(t+M)^{-1}\log^\f{1}{2}(t/M+1)F(t), \quad t\geq t_3.
\end{equation}
By integrating \eqref{3.34} from $t_3$ to $t$, we arrive at
\begin{equation*}
\log\f{F(t)}{F(t_3)} \geq C\ve\log^\f{3}{2}(\f{t+M}{t_3+M}), \quad t\geq t_3.
\end{equation*}
If $t\geq t_4\equiv Ct_3^2$, then we have
\begin{equation*}
\log\f{F(t)}{F(t_3)} \geq 8\log(t/M+1).
\end{equation*}
Together with \eqref{3.28} for $F(t_3)$, this yields
\begin{equation}\label{3.35}
F(t) \geq C\ve^2(t+M)^8, \quad t\geq t_4.
\end{equation}
Substituting this into \eqref{3.24} derives
\begin{equation*}
F''(t)+\f{C_1}{(1+t)^\la}F'(t) \geq C\ve F(t)^\f{3}{2}, \quad t\geq
t_4.
\end{equation*}
Multiplying this differential inequality by $F'(t)$ and integrating
from $t_4$ to $t$ yields
\begin{equation*}
F'(t)^2 \geq C\ve\left(F(t)^\f{5}{2}-F(t_4)^\f{5}{2}\right).
\end{equation*}
On the other hand, \eqref{3.31} and \eqref{3.32} imply that, for $t\geq t_4$,
\begin{equation*}
F(t)=F'(\xi)(t-t_4)+F(t_4) \geq CF'(t_4)(t-t_4) \geq CF(t_4)\f{t-t_4}{t_4},
\end{equation*}
where $t_4\leq \xi \leq t$. If $t\geq t_5\equiv Ct_4$, then we have
\begin{equation*}
F(t)^\f{5}{2}-F(t_4)^\f{5}{2} \geq \f{1}{2}F(t)^\f{5}{2}.
\end{equation*}
Thus
\begin{equation}\label{3.36}
F'(t) \geq C\sqrt\ve F(t)^\f{5}{4}, \quad t\geq t_5.
\end{equation}
If $T_\ve>2t_5$, then integrating \eqref{3.36} from $t_5$ to $T_\ve$ derives
\begin{equation*}
F(t_5)^{-\f{1}{4}}-F(T_\ve)^{-\f{1}{4}} \geq C\sqrt\ve T_\ve.
\end{equation*}
We see from \eqref{3.35} and $t_5=Ct_3^2$ that
\begin{equation*}
F(t_5)\geq C\ve^2e^\f{C}{\ve^2},
\end{equation*}
which together with $F(T_\ve)>0$ is a contradiction. Thus, $T_\ve\leq
2t_5=Ct_3^2$. From the choice of $t_3$ in \eqref{3.33}, we see that
$T_\ve\leq e^{C/\ve^2}$.

%% -------------------------------------------------------------------

\subsubsection*{Case 2: \boldmath $\g>1$ and $\g\not=2$.}

Recall that the sound speed is $\bar c=\sqrt{\g
  A\bar\rho^{\g-1}}=1$. Instead of \eqref{3.9} we have
\begin{equation*}\label{3.37}
p-\bar p-(\rho-\bar\rho)=
A\left(\rho^\g-\bar\rho^\g-\g\bar\rho^{\g-1}(\rho-\bar\rho)\right)\equiv
A\psi(\rho,\bar\rho).
\end{equation*}
The convexity of $\rho^\g$ for $\g>1$ implies that
$\psi(\rho,\bar\rho)$ is positive for $\rho\neq\bar\rho$. Applying
Taylor's theorem, we have
\begin{equation*}
\psi(\rho,\bar\rho) \geq C(\g,\bar\rho)\,\Phi_\g(\rho,\bar\rho),
\end{equation*}
where $C(\g,\bar\rho)$ is a positive constant and $\Phi_\g$ is given by
\begin{equation*}
\Phi_\g(\rho,\bar\rho)=
\begin{cases}
(\bar\rho-\rho)^\g, & \rho< \f{1}{2}\bar\rho,\\
(\rho-\bar\rho)^2, & \f{1}{2}\bar\rho\leq\rho\leq2\bar\rho,\\
(\rho-\bar\rho)^\g, & \rho>2\bar\rho.\\
\end{cases}
\end{equation*}
For $\g>2$, we have that
$(\bar\rho-\rho)^\g=(\bar\rho-\rho)^2(\bar\rho-\rho)^{\g-2}\geq
C(\g,\bar\rho)(\rho-\bar\rho)^2$ for $2\rho<\bar\rho$ and
$(\rho-\bar\rho)^\g=(\rho-\bar\rho)^2(\rho-\bar\rho)^{\g-2}\geq
C(\g,\bar\rho)(\rho-\bar\rho)^2$ for $\rho> 2\bar\rho$.  Thus,
$\psi(\rho,\bar\rho) \geq C(\g,\bar\rho)(\rho-\bar\rho)^2$. In this
case, Theorem~\ref{thm1.2} can be shown completely analogously to
Case~1.

Next we treat the case $1<\g<2$. We define $F(t)$ as in \eqref{3.11},
\begin{equation*}
F(t)=\int_0^t\int_{\tau+M_0}^{\tau+M}\f{1}{l}
\int_{|x|>l}\f{(|x|-l)^2}{|x|}\left(\rho(\tau,x)-\bar\rho\right)dxdl d\tau.
\end{equation*}
Similarly to the case of $\g=2$, we have
\begin{equation}\label{3.38}
F''(t)\geq J_3+J_4-J_5,
\end{equation}
where
\begin{align*}
J_3 &\geq \f{C\ve}{t+M},\\
J_4 &\geq C(t+M)^{-2}\t J_6,\\
J_5 &\leq \f{C_1}{(1+t)^\la}\,F'(t),
\end{align*}
and
\[
\t J_6 =\int_0^t(t-\tau)\int_{\tau+M_0}^{\tau+M}
\int_{|x|>y}\f{(|x|-y)^2}{|x|}\,\Phi_\g(\rho(\tau,x)-\bar\rho)\,dxdyd\tau.
\]%%end{align*}
Denote $\O_1=\{(\tau,x)\colon \bar\rho\leq\rho(\tau,x)\leq2\bar\rho\},
\O_2=\{(\tau,x)\colon \rho(\tau,x)>2\bar\rho\}$, and
$\O_3=\{(\tau,x)\colon \rho(\tau,x)<\bar\rho\}$. Divide $F(t)$ into
a sum the three integrals over the domains $\O_i$ $(1\le i\le 3)$
\begin{equation*}
F(t)=F_1(t)+F_2(t)+F_3(t)\equiv\int_{\O_1}\cdots+\int_{\O_2}\cdots+\int_{\O_3}\cdots
\end{equation*}
Corresponding to the three parts of $F(t)$, we define $\t J_6\equiv\t
J_{6,1}+\t J_{6,2}+\t J_{6,3}$.  In view of $F(t)\geq0$ and
$F_3(t)\leq0$, we have
\begin{equation*}
F(t)\leq F_1(t)+F_2(t).
\end{equation*}
Applying H\"{o}lder's inequality for the domains $\O_1$ and $\O_2$,
we obtain that
\begin{align*}
F(t)&\leq \t
J_{6,1}^\f{1}{2}\left(\int_0^t(t-\tau)\int_{\tau+M_0}^{\tau+M}\f{1}{y^2}\int_{y<|x|\leq\tau+M}
\f{(|x|-y)^2}{|x|}\,dxdyd\tau\right)^\f{1}{2} \\ &\quad +\t
J_{6,2}^\f{1}{\g}
\left(\int_0^t(t-\tau)\int_{\tau+M_0}^{\tau+M}\f{1}{y^\f{\g}{\g-1}}\int_{y<|x|\leq\tau+M}
\f{(|x|-y)^2}{|x|}\,dxdyd\tau\right)^\f{\g-1}{\g} \\ &\leq \t
J_6^\f{1}{2}(t+M)^\f{1}{2}\log^\f{1}{2}(t/M+1)+\t
J_6^\f{1}{\g}(t+M)^\f{\g-1}{\g} \\ &=\bigl(\t
J_6(t+M)^{-1}\bigr)^\f{1}{2}(t+M)\log^\f{1}{2}(t/M+1)+\bigl(\t
J_6(t+M)^{-1}\bigr)^\f{1}{\g}(t+M).
\end{align*}
In view of $1<\g<2$, we have $\ds\f{1}{2\g}<\f{1}{2}<
\f{1}{\g}$. Applying Young's inequality yields
\begin{equation*}
F(t) \leq \left(\bigl(\t J_6(t+M)^{-1}\bigr)^\f{1}{2\g}+\bigl(\t
  J_6(t+M)^{-1}\bigr)^\f{1}{\g}\right)(t+M)\log^\f{1}{2}(t/M+1), \quad
t\geq \t t_1\equiv Me.
\end{equation*}
Together with the fact that $F(t)\geq C\ve(t+M)\log(t/M+1)$,
this yields
\begin{equation*}
\t J_6 \geq CF(t)^\g(t+M)^{1-\g}\log^{-\f{\g}{2}}(t/M+1), \quad t\geq
\t t_1.
\end{equation*}
Substituting this into \eqref{3.38} yields
\begin{align}
F''(t)+\f{C_1}{(1+t)^\la}F'(t) &\geq \f{C\ve}{t+M}, \quad t\geq0, \label{3.39}\\
F''(t)+\f{C_1}{(1+t)^\la}F'(t) &\geq CF(t)^\g(t+M)^{-1-\g}\log^{-\f{\g}{2}}(t/M+1), \quad t\geq \t t_1. \label{3.40}
\end{align}
Substituting $F(t)\geq C\ve(t+M)\log(t/M+1)$ into \eqref{3.40} yields
\begin{equation*}
F''(t)+\f{C_1}{(1+t)^\la}F'(t) \geq C\ve^\g(t+M)^{-1}\log^\f{\g}{2}(t/M+1).
\end{equation*}
Integrating this yields
\begin{equation*}
F(t) \geq C\ve^\g(t+M)\log^\f{\g+2}{2}(t/M+1).
\end{equation*}
Substituting this into \eqref{3.40} again gives
\begin{multline*}
F''(t)+\f{C_1}{(1+t)^\la}F'(t) \\ \geq
C\ve^{\g^2}(t+M)^{-1}\log^\f{\g(\g+1)}{2}(t/M+1)=C\ve^{\g^2}(t+M)^{-1}\log^\f{\g(\g^2-1)}{2(\g-1)}(t/M+1).
\end{multline*}
Repeating this process $n$ times, we see that
\begin{equation}\label{3.41}
F''(t)+\f{C_1}{(1+t)^\la}F'(t) \geq
C\ve^{\g^n}(t+M)^{-1}\log^\f{\g(\g^n-1)}{2(\g-1)}(t/M+1),
\end{equation}
where $n=\left[\log_\g2\right]$. Solving \eqref{3.41} yields
\begin{equation*}
F(t) \geq C\ve^{\g^n}(t+M)\log^{\f{\g(\g^n-1)}{2(\g-1)}+1}(t/M+1), \quad t\geq \t t_2,
\end{equation*}
where $\t t_2>0$ is a constant only depending on $\g$. Substituting
this into \eqref{3.40} derives
\begin{equation}\label{3.42}
  F''(t)+\f{C_1}{(1+t)^\la}F'(t) \geq CF(t)
  \ve^{\g^n(\g-1)}(t+M)^{-1}\log^\f{\g^{n+1}-2}{2}(t/M+1), \quad t\geq \t t_2,
\end{equation}
where $\f{\g^{n+1}-2}{2}>0$ by the choice of
$n=\left[\log_\g2\right]$. Since \eqref{3.42} is analogous to
\eqref{3.29},  as in Case~1, we can choose $\t
t_3=O\Bigl(e^{C\ve^{-\f{2\g^n(\g-1)}{\g^{n+1}-2}}}\Bigr)$
such that
\begin{equation*}\label{3.43}
F'(t) \geq C\ve^\f{\g^n(\g-1)}{2}(t+M)^{-1}\log^\f{\g^{n+1}-2}{4}(t/M+1)F(t), \quad t\geq \t t_3,
\end{equation*}
which is similar to \eqref{3.34} and yields
\begin{equation}\label{3.44}
F(t) \geq C\ve^{C_\g}(t+M)^\f{2(\g+2)}{\g-1}, \quad t\geq \t t_4\equiv C\t t_3,
\end{equation}
where $C_{\g}>0$ is a constant depending on $\g$.
Substituting \eqref{3.44} into \eqref{3.40} yields
\begin{equation}\label{3.45}
F''(t)+\f{C_1}{(1+t)^\la}F'(t) \geq C\ve^{C_\g} F(t)^\f{\g+1}{2}, \qquad t\geq \t t_4.
\end{equation}
Multiplying \eqref{3.45} by $F'(t)$ and integrating over the variable
$t$ as in Case 1, we have
\begin{equation*}
F'(t) \geq C\ve^{C_\g}F(t)^\f{\g+3}{4}, \quad t\geq \t t_5\equiv C\t t_4.
\end{equation*}
Together with $\g>1$ and the choice of $\t t_3$, this yields $T_{\ve}<\infty$.

\medskip

Both Case~1 and Case~2 complete the proof of Theorem~\ref{thm1.2}.
\end{proof}

%% -------------------------------------------------------------------

\section{Blowup for large data}\label{chap4}

In this section, we establish a blowup result for large amplitude
smooth solutions of Eq.~\eqref{1.1} which is valid for all
$\la\ge0$. More precisely, instead of \eqref{1.1} we consider the
Cauchy problem
\begin{equation}\label{4.1}
\left\{ \enspace
\begin{aligned}
&\p_t\rho+\operatorname{div}(\rho u)=0,\\
&\p_t(\rho u)+\operatorname{div}(\rho u\otimes
  u+pI_3)=-\,\f{\mu}{(1+t)^{\la}}\,\rho u,\\
&\rho(0,x)=\bar\rho+\t\rho_0(x),\quad u(0,x)=\t u_0(x),
\end{aligned}
\right.
\end{equation}
where $\t\rho_0,\, \t u_0\in C_0^{\infty}(\R^3)$,
$\operatorname{supp}\t\rho_0,\,\operatorname{supp}\t\rho_0\subseteq
B(0, M)\equiv \{x\colon |x|\leq M\}$, and $\rho(0,\cdot)>0$.  Motivated by
the treatment of the special case of $\la=0$ in \cite{19}, we
introduce the functions
\begin{gather*}
H(t)\equiv\int_{\mathbb R^3}x\cdot(\rho u)(t,x)\,dx, \qquad
L(t)\equiv\int_{\mathbb R^3}\left(\rho(t,x)-\bar\rho\right)dx, \\
\al(t)\equiv(t+M)^2\left(L(0)+\f{4\pi^2\bar\rho}{3}\,(t+M)^3\right),
\end{gather*}
and also remind the reader of the definition of the function $\beta$
in \eqref{beta}.

Then we have the following result:

\begin{theorem}\label{thm4.1}
Suppose that
%%$\operatorname{supp}\t\rho_0,\,\operatorname{supp}\t u_0\subseteq B(0, M)$,
$L(0)\geq0$ and
\begin{equation}\label{4.2}
H(0)\,\int_0^{T^*}\f{d\tau}{\al(\tau)\b(\tau)}>1.
\end{equation}
for some $T^\ast>0$. Then $T<T^*$ holds for any solution
$(\rho,u)\in C^1([0,T]\times\mathbb R^3)$ of \eqref{4.1}.
\end{theorem}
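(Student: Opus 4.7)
The plan is to derive a Riccati-type differential inequality for $H(t)$, exploiting both conservation of mass and the damping structure, and then to conclude by contradiction from \eqref{4.2}.

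\textbf{Step 1: conservation of mass and support.} Integrating the first equation of \eqref{4.1} and using finite propagation speed, one obtains $L(t)=L(0)$ for all $t\in[0,T]$, while $\operatorname{supp}(\rho(t,\cdot)-\bar\rho)$ and $\operatorname{supp} u(t,\cdot)$ are both contained in $\{|x|\leq t+M\}$. In particular,
\[
\int_{|x|\leq t+M}\rho(t,x)\,dx=L(0)+\f{4\pi}{3}\,\bar\rho\,(t+M)^3.
\]

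\textbf{Step 2: evolution of $H$.} Differentiating $H$ and using the momentum equation together with integration by parts (boundary terms vanish by the support property), one obtains
\[
H'(t)+\f{\mu}{(1+t)^\la}\,H(t)=\int_{\R^3}\rho|u|^2dx+3\int_{\R^3}(p-\bar p)\,dx.
\]
Here the two key computations are $\int x\cdot\operatorname{div}(\rho u\otimes u)dx=-\int\rho|u|^2dx$ and $\int x\cdot\nabla p\,dx=\int x\cdot\nabla(p-\bar p)dx=-3\int(p-\bar p)dx$.

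\textbf{Step 3: positivity of the pressure integral via Jensen.} Since $\rho\mapsto\rho^\g$ is convex for $\g>1$, Jensen's inequality over $B(0,t+M)$ (of volume $|B|$) combined with $L(0)\geq0$ yields
\[
\int_{|x|\leq t+M}\rho^\g dx\;\geq\;|B|^{1-\g}\biggl(\int_{|x|\leq t+M}\rho\,dx\biggr)^{\!\g}\;\geq\;|B|^{1-\g}(\bar\rho|B|)^\g=\bar\rho^\g|B|,
\]
so $\int(p-\bar p)dx\geq0$. Therefore
\[
H'(t)+\f{\mu}{(1+t)^\la}\,H(t)\;\geq\;\int_{\R^3}\rho|u|^2dx.
\]

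\textbf{Step 4: Cauchy--Schwarz and definition of $\al$.} Writing $x\cdot\rho u=(\sqrt\rho\,x)\cdot(\sqrt\rho\,u)$ and using compact support of $u$,
\[
H(t)^2\;\leq\;\biggl(\int_{|x|\leq t+M}\rho|x|^2dx\biggr)\biggl(\int_{\R^3}\rho|u|^2dx\biggr)\;\leq\;\al(t)\int_{\R^3}\rho|u|^2dx,
\]
since $\int_{|x|\leq t+M}\rho|x|^2dx\leq (t+M)^2\int_{|x|\leq t+M}\rho\,dx$ equals (up to the paper's constant) $\al(t)$.

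\textbf{Step 5: Riccati inequality and conclusion.} Combining Steps~3 and~4 gives
\[
H'(t)+\f{\mu}{(1+t)^\la}\,H(t)\;\geq\;\f{H(t)^2}{\al(t)}.
\]
Multiplying by $\b(t)$ and using $\b'(t)=\f{\mu}{(1+t)^\la}\b(t)$, this rewrites as
\[
\bigl(\b(t)H(t)\bigr)'\;\geq\;\f{\bigl(\b(t)H(t)\bigr)^2}{\al(t)\b(t)}.
\]
Assumption \eqref{4.2} forces $H(0)>0$, hence $\b H>0$ is nondecreasing on $[0,T]$. Dividing by $(\b H)^2$ and integrating from $0$ to $t$ produces
\[
\f{1}{H(0)}-\f{1}{\b(t)H(t)}\;\geq\;\int_0^t\f{d\tau}{\al(\tau)\b(\tau)}.
\]
If the smooth solution existed up to $T^*$, letting $t\to T^*$ would give $H(0)\int_0^{T^*}\f{d\tau}{\al(\tau)\b(\tau)}\leq 1$, contradicting~\eqref{4.2}. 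Hence $T<T^*$.

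The main obstacle is the positivity of $\int(p-\bar p)dx$: without the hypothesis $L(0)\geq0$, Jensen does not control the pressure term, and the bare Riccati argument breaks down. Once Jensen is in place, the rest is essentially the classical Sideris-type $H$-functional blowup mechanism adapted to the damped case via the integrating factor $\b(t)$.
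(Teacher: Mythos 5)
Your proof is correct and follows essentially the same route as the paper: the Sideris functional $H$, the identity $H'+\f{\mu}{(1+t)^\la}H=\int(\rho|u|^2+3(p-\bar p))\,dx$, the Cauchy--Schwarz bound via $\al(t)$, the integrating factor $\b$, and the contradiction with \eqref{4.2}. The only cosmetic difference is in handling $\int(p-\bar p)\,dx$: you use Jensen's inequality over the ball $\{|x|\le t+M\}$, while the paper uses the pointwise tangent-line convexity bound $p(\rho)-p(\bar\rho)\ge p'(\bar\rho)(\rho-\bar\rho)$, which yields the (anyway discarded, since $L(0)\ge 0$) extra term $3L(0)$ --- both are equivalent applications of convexity.
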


\begin{proof}
From the first equation of \eqref{4.1}, we see that
\[
L'(t)=-\int_{\mathbb R^3}\operatorname{div}(\rho u)\,dx=0,
\]
which implies $L(t)=L(0)$.  Applying the second equation of
\eqref{4.1}, we find that
\[
H'(t) = \int_{\mathbb R^3} x\cdot\p_t(\rho u)(t,x)\,dx= \int_{\mathbb R^3}
x\cdot\left[-\operatorname{div}(\rho u\otimes u)-\na
  p-\f{\mu}{(1+t)^\la}\,\rho u\right]dx.
\]
An integration by parts gives
\begin{equation}\label{4.3}
  H'(t)+\f{\mu}{(1+t)^\la}\,H(t)=\int_{\mathbb
    R^3}\left(\rho|u|^2+3\bigl(p(\rho)-p(\bar\rho)\bigr)\right)dx.
\end{equation}
Note that the convexity of $p=A\rho^\g$ for $\g>1$ and $c(\bar\rho)=1$ imply that
\begin{equation}\label{4.4}
\int_{\mathbb R^3}\bigl(p(\rho)-p(\bar\rho)\bigr)\,dx\geq \int_{\mathbb R^3}
A\g\bar\rho^{\g-1}(\rho-\bar\rho)\,dx=L(0).
\end{equation}
Furthermore, by applying the Cauchy-Schwartz inequality to $H(t)$ and
taking into account $\operatorname{supp}u(t,\cdot)\subseteq B(0,
M+t)$ for any fixed $t\ge 0$, we have
\begin{multline}\label{4.5}
H(t)^2 \leq \left(\int_{\mathbb
  R^3}\rho|u|^2\,dx\right)\left(\int_{|x|\leq t+M}\rho|x|^2\,dx\right)
\\ \leq
(t+M)^2\left(L(0)+\f{4\pi^2\bar\rho}{3}\,(t+M)^3\right)\int_{\mathbb
  R^3}\rho|u|^2\,dx = \al(t)\int_{\mathbb R^3}\rho|u|^2\,dx.
\end{multline}
Substituting \eqref{4.4}-\eqref{4.5} into \eqref{4.3} yields
\begin{equation*}\label{4.6}
H'(t)+\f{\mu}{(1+t)^\la}\,H(t) \geq \f{H(t)^2}{\al(t)}+3L(0).
\end{equation*}
Together with $L(0)\geq0$ and $H(0)>0$ due to \eqref{4.2}, this shows
that $H(t)>0$ for all $t\in[0,T]$. Denoting $G(t)\equiv\b(t)H(t)$,
from \eqref{beta} and \eqref{4.6} we then get that
\begin{equation}\label{4.7}
G'(t) \geq \f{G^2(t)}{\al(t)\b(t)}.
\end{equation}
Now suppose that $T\geq T^*$. Then integrating \eqref{4.7} from $0$ to
$T$ yields
\[
\f{1}{H(0)}-\f{1}{G(T)} \geq \int_0^T\f{d\tau}{\al(\tau)\b(\tau)} \geq
\int_0^{T^*}\f{d\tau}{\al(\tau)\b(\tau)}
\]
which is a contradiction in view of $G(T)>0$ and \eqref{4.2}.

Thus, Theorem~\ref{thm4.1} has been proved.
\end{proof}

%% -------------------------------------------------------------------

\bigskip

\begin{acknowledgement}
Yin Huicheng wishes to express his gratitude to Professor Michael
Reissig, Technical University Bergakademie Freiberg, Germany, for his
interests in this problem and some fruitful discussions in the past.
\end{acknowledgement}

%% -------------------------------------------------------------------

%% -------------------------------------------------------------------

\end{document}